\newtheorem{thm}{Theorem}
\newtheorem{prop}{Proposition}
\newtheorem{lem}{Lemma}
\newtheorem{exam}{Example}
\newtheorem{definition}{Definition}
\numberwithin{equation}{section}
\newcommand{\abs}[1]{\lvert#1\rvert}
\newcommand{\norm}[1]{\lVert#1\rVert}
\newcommand{\oc}{\xrightarrow{\mathrm{o}}}	
\newcommand{\pc}{\xrightarrow{\mathrm{p}}}	
\newcommand{\unc}{\xrightarrow{\mathrm{un}}} 
\newcommand{\mc}{\xrightarrow{\mathrm{m}}} 
\newcommand{\umc}{\xrightarrow{\mathrm{um}}} 
\newcommand{\tc}{\xrightarrow{\tau}}	
\newcommand{\uoc}{\xrightarrow{\mathrm{uo}}}	
\newcommand{\nc}{\xrightarrow{\norm{\cdot}}}	
\newcommand{\wc}{\xrightarrow{\mathit{w}}}	
\DeclareMathAlphabet{\mathpzc}{OT1}{pzc}{m}{it}
\DeclareSymbolFont{bbold}{U}{bbold}{m}{n}
\DeclareSymbolFontAlphabet{\mathbbold}{bbold}
\def\one{\mathbbold{1}}
\DeclareMathOperator{\Span}{span}
\renewcommand{\le}{\leqslant}
\renewcommand{\ge}{\geqslant}
\begin{document}
	
\title{AMS Journal Sample}
	\author{Y. A. Dabboorasad$^{1,2}$}
	\address{$1$ Department of Mathematics, Islamic University of Gaza, P.O.Box 108, Gaza City, Palestine.}
	\email{yasad@iugaza.edu.ps, ysf\_atef@hotmail.com}

	\author{E. Y. Emelyanov$^2$}
	\author{M. A. A. Marabeh$^2$}
	\address{$^2$ Department of Mathematics, Middle East Technical University,  06800 Ankara, Turkey.}
	\email{yousef.dabboorasad@metu.edu.tr, eduard@metu.edu.tr, mohammad.marabeh@metu.edu.tr, m.maraabeh@gmail.com}
	
	\keywords{vector lattice, Banach lattice, multi-normed vector lattice, um-convergence, um-topology, uo-convergence, un-convergence.}
	\subjclass[2010]{Primary: 46A03, 46A40. Secondary: 46A50}
	\date{\today}
	
\title{$um$-Topology in multi-normed vector lattices}
\begin{abstract}
Let $\mathcal{M}=\{m_\lambda\}_{\lambda\in\Lambda}$ be a separating family of lattice seminorms on a vector lattice $X$,  
then $(X,\mathcal{M})$ is called a multi-normed vector lattice (or MNVL). 
We write $x_\alpha \xrightarrow{\mathrm{m}} x$ if $m_\lambda(x_\alpha-x)\to 0$ for all $\lambda\in\Lambda$. 
A net $x_\alpha$ in an MNVL $X=(X,\mathcal{M})$ is said to be unbounded $m$-convergent (or $um$-convergent) 
to $x$ if $\lvert x_\alpha-x \rvert\wedge u \xrightarrow{\mathrm{m}} 0$ for all $u\in X_+$. $um$-Convergence generalizes $un$-convergence \cite{DOT,KMT}
and $uaw$-convergence \cite{Zab}, and specializes $up$-convergence \cite{AEEM1} and $u\tau$-convergence \cite{DEM2}. 
$um$-Convergence is always topological, whose corresponding topology is called unbounded $m$-topology (or $um$-topology). 
We show that, for an $m$-complete metrizable MNVL $(X,\mathcal{M})$, the $um$-topology is metrizable iff $X$ has a countable 
topological orthogonal system. In terms of $um$-completeness, we present a characterization of MNVLs possessing both Lebesgue's 
and Levi's properties. Then, we characterize MNVLs possessing simultaneously the $\sigma$-Lebesgue and $\sigma$-Levi properties 
in terms of sequential $um$-completeness. Finally, we prove that any $m$-bounded and $um$-closed set is $um$-compact 
iff the space is atomic and has Lebesgue's and Levi's properties.
\end{abstract}
\maketitle

\section{Introduction and preliminaries}

Unbounded convergences have attracted many researchers (see for instance \cite{GaoX:14,Gao:14,GTX,EM16,DOT,Zab,KMT,AEEM1,LC,KV,KLT,GLX1,GLX2,Tay17,DEM2}. 
Unbounded convergences are well-investigated in vector and normed lattices (cf. \cite{DOT,GTX,KMT,Tr04,Wick77}). 
In the present paper, we also extend several previous results from \cite{DOT,GTX,KMT,Tr04,Wick77,Zab} to multi-normed setting.
This work is a continuation of \cite{DEM2}, in which unbounded topological convergence was studied in locally solid vector lattices.
	
For a net $x_\alpha$ in a  vector lattice $X$, we write $x_\alpha\oc x$ if $x_\alpha$ {\em converges to $x$ in order}. 
That is, there is a net $y_\beta$, possibly over a different index set, such that $y_\beta \downarrow 0$ and, for every
$\beta$, there exists $\alpha_\beta$ satisfying 	$\abs{x_\alpha-x}\le y_\beta$ whenever $\alpha\ge\alpha_\beta$. A net $x_\alpha$ in a vector lattice $X$ is 
{\em unbounded order convergent} ({\em uo-convergent}) to $x\in X$ if $\abs{x_\alpha-x}\wedge u\oc 0$ for every $u\in X_+$. We write $x_\alpha\uoc x$ in this case. 
Clearly, order convergence implies $uo$-convergence and they coincide for order bounded nets. For a measure space $(\Omega,\Sigma,\mu)$ and a sequence 
$f_n$ in $L_p(\mu)$ ($0\leq p\leq\infty$), $f_n\uoc 0$ iff $f_n\to 0$ almost everywhere \cite[Rem.3.4]{GTX}. It is known that almost everywhere convergence is not topological. Therefore, $uo$-convergence might not be topological in general. It was also shown recently that order convergence is never topological in infinite dimensional 
vector lattices \cite{DEM1}.
	
Let $(X,\lVert\cdot\rVert)$ be a normed lattice. For a net $x_\alpha$ in $X$, we write $x_\alpha\nc x$ if $x_\alpha$ converges to $x$ in norm. 
We say that $x_\alpha$ {\em unbounded norm converges} to $x$ ($x_\alpha$ {\em un-converges} to $x$ or $x_\alpha\unc x$) if $\abs{x_\alpha-x}\wedge u\nc 0$
for every $u\in X_+$. Clearly, norm convergence implies $un$-convergence. The $un$-convergence is topological, and the corresponding topology 
(which is known as {\em un-topology}) was investigated in \cite{KMT}. A net $x_\alpha$ {\em uaw-converges} to $x$ if 
$\abs{x_\alpha-x}\wedge u\wc 0$ for all $u\in X_+$, where ``$\mathit{w}$" stands for the weak convergence. Absolute weak convergence implies $uaw$-convergence. 
$uaw$-Convergence and $uaw$-topology were introduced and investigated in \cite{Zab}.

All topologies considered throughout this article are assumed to be Hausdorff. If a linear topology $\tau$ on a vector lattice $X$ has a base at zero consisting of solid sets, 
then the pair $(X,\tau)$ is called a {\em locally solid vector lattice}. Furthermore, if $\tau$ has base at zero consisting of convex-solid sets, then $(X,\tau)$ is called a 
{\em locally convex-solid vector lattice}. It is known that a linear topology $\tau$ on $X$ is locally convex-solid iff there exists a family 
$\mathcal{M}=\{m_\lambda\}_{\lambda\in\Lambda}$ of lattice seminorms that generates $\tau$ (cf. \cite[Thm.2.25]{Alip03}). Moreover, for such $\mathcal{M}$,
$x_\alpha\tc x$ iff $m_\lambda(x_\alpha-x)\xrightarrow[\alpha]{}0$ in $\mathbb{R}$ for each $m_\lambda\in\mathcal{M}$. Since $\tau$ is Hausdorff then the family 
$\mathcal{M}$ is separating.
	
A subset $A$ in a topological vector space $(X,\tau)$ is called $\tau$-{\em bounded} if, for every $\tau$-neighborhood $V$ of zero, 
there exists $\lambda>0$ such that $A\subseteq\lambda V$. In the case when the topology $\tau$ is generated by a family $\{m_\lambda \}_{\lambda\in\Lambda}$ of seminorms,
a subset $A$ of $X$ is $\tau$-bounded iff $\sup_{a\in A}m_\lambda(a)<\infty$ for all $\lambda\in\Lambda$.
	
Recall that a locally solid vector lattice $(X,\tau)$ is said to have the {\em Lebesgue property} if $x_\alpha\downarrow 0$ in $X$ implies $x_\alpha\tc 0$ or, 
equivalently, if $x_\alpha\oc 0$ implies $x_\alpha\tc 0$; $(X,\tau)$ is said to have the {\em $\sigma$-Lebesgue property} if $x_n\downarrow 0$ in $X$ implies $x_n\tc 0$; 
and $\left( X,\tau\right)$ is said to have the {\em pre-Lebesgue} property if $0\leq x_n\uparrow\ \leq x$ implies only that $x_n$ is $\tau$-Cauchy. 
Finally, $\left(X,\tau\right)$ is said to have the {\em Levi property} if, when $0\leq x_\alpha\uparrow$ and $x_\alpha$ is $\tau$-bounded, 
then $x_\alpha\uparrow x$ for some $x\in X$; $(X,\tau)$ is said to have the {\em $\sigma$-Levi property} if $x_n$ has supremum in $X$ provided
by $0\leq x_n\uparrow$ and by the $\tau$-boundedness of $x_n$, see \cite[Def. 3.16]{Alip03}.

\section{Multi-Normed Vector Lattices}

Let $(X,\tau)$ be a locally convex-solid vector lattice with an upward directed family $\mathcal{M}=\{m_\lambda\}_{\lambda\in\Lambda}$ of lattice seminorms generating $\tau$. 
Throughout this article, the pair $(X,\mathcal{M})$ will be referred to as a {\em multi-normed vector lattice $($MNVL$)$}. Also, $\tau$-convergence, $\tau$-Cauchy, $\tau$-complete, etc. 
will be denoted by $m$-convergence, $m$-Cauchy, $m$-complete, etc.

Let $X$ be a vector space, $E$ be a vector lattice, and $p:X\to E_+$ be a vector norm (i.e.\  $\,p(x)=0\Leftrightarrow x=0$, 
$p(\lambda x)=|\lambda|p(x)$ for all $\lambda\in\mathbb{R}$, $x\in X$, and $p(x+y)\leq p(x)+p(y)$ for all $x,y\in X$), 
then $(X,p,E)$ is called a {\em lattice-normed space}, abbreviated as {\em LNS}, see \cite{K}. If $X$ is a vector lattice, 
and the vector norm $p$ is monotone (i.e.\ $\abs{x}\leq\abs{y}\Rightarrow p(x)\leq p(y)$), then the 
triple $(X,p,E)$ is called a {\em {\em lattice-normed vector lattice}}, abbreviated as {\em LNVL} (cf. \cite{AEEM1,AEEM2}).

Given an LNS $(X,p,E)$. Recall that a net $x_\alpha$ in $X$ is said to be {\em p-convergent} to $x$ (see \cite{AEEM1}) if $p(x_\alpha-x)\oc 0$ in $E$. In this case, we write $x_\alpha\pc x$. 
A subset $A$ of $X$ is called {\em p-bounded} if there exists $e\in E$ such that $p(a)\leq e$ for all $a\in A$.
	
\begin{prop}\label{MNVL is LNVL}
Every MNVL induces an LNVL. Moreover, for arbitrary nets, $p$-convergence in the induced LNVL implies $m$-convergence, and they coincide in the case of $p$-bounded nets.
\end{prop}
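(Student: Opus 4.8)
The plan is to realize the product vector lattice $E=\mathbb{R}^\Lambda$, ordered coordinatewise, as the target of the vector norm, and to take the seminorms as the coordinates of $p$. First I would set, for $x\in X$,
\[
p(x)=\bigl(m_\lambda(x)\bigr)_{\lambda\in\Lambda}\in E_+ .
\]
Since each $m_\lambda$ is a seminorm, positive homogeneity and the triangle inequality for $p$ hold coordinatewise, while definiteness $p(x)=0\Leftrightarrow x=0$ is precisely the statement that $\mathcal{M}$ is separating. Monotonicity of $p$ follows from the lattice property of each $m_\lambda$: if $\abs{x}\le\abs{y}$ then $m_\lambda(x)\le m_\lambda(y)$ for every $\lambda$, hence $p(x)\le p(y)$. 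Thus $(X,p,E)$ is an LNVL. I would also record at the outset that $E=\mathbb{R}^\Lambda$ is Dedekind complete and that suprema, infima, and order limits in $E$ are all computed coordinatewise; this is the fact that drives the remaining two implications.

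For the first implication, suppose $x_\alpha\pc x$, i.e.\ $p(x_\alpha-x)\oc 0$ in $E$. By definition there is a net $y_\beta\downarrow 0$ in $E$ with $p(x_\alpha-x)\le y_\beta$ eventually in $\alpha$, for each $\beta$. Reading off the $\lambda$-th coordinate and using that $y_\beta^{(\lambda)}\downarrow 0$ in $\mathbb{R}$, for each fixed $\lambda$ and each $\varepsilon>0$ I can pick $\beta$ with $y_\beta^{(\lambda)}<\varepsilon$ and then $\alpha_\beta$ with $m_\lambda(x_\alpha-x)=p(x_\alpha-x)^{(\lambda)}\le y_\beta^{(\lambda)}<\varepsilon$ for all $\alpha\ge\alpha_\beta$. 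Hence $m_\lambda(x_\alpha-x)\to 0$ for every $\lambda$, that is, $x_\alpha\mc x$. Notice that this direction uses no boundedness hypothesis.

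For the converse on $p$-bounded nets, assume $x_\alpha\mc x$ and that the net is $p$-bounded, say $p(x_\alpha)\le e$ for all $\alpha$; then $p(x_\alpha-x)\le\bar e$ for all $\alpha$ with $\bar e:=e+p(x)\in E_+$. The hard part is to manufacture a single decreasing net that dominates $p(x_\alpha-x)$, and this is exactly where $p$-boundedness enters. I would set $y_\alpha:=\sup_{\gamma\ge\alpha}p(x_\gamma-x)$, which exists in $E$ because $\{p(x_\gamma-x):\gamma\ge\alpha\}$ is bounded above by $\bar e$ and $E$ is Dedekind complete. The net $y_\alpha$ is decreasing, and coordinatewise $\inf_\alpha y_\alpha^{(\lambda)}=\limsup_\alpha m_\lambda(x_\alpha-x)=0$ since $m_\lambda(x_\alpha-x)\to 0$; thus $y_\alpha\downarrow 0$ in $E$. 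As $p(x_\alpha-x)\le y_{\alpha_0}$ for all $\alpha\ge\alpha_0$ and every fixed $\alpha_0$, the definition of order convergence is met with the dominating net $y_\alpha\downarrow 0$, giving $p(x_\alpha-x)\oc 0$, i.e.\ $x_\alpha\pc x$. The only genuine obstacle is ensuring that the tail suprema $y_\alpha$ actually live in $E$; without $p$-boundedness they need not, which is why the hypothesis is indispensable.
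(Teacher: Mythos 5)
Your proposal is correct and follows essentially the same route as the paper: the same realization $E=\mathbb{R}^{\Lambda}$ with $p(x)=(m_\lambda(x))_{\lambda\in\Lambda}$, coordinatewise verification of the vector-norm and monotonicity axioms, and the observation that order convergence in $\mathbb{R}^{\Lambda}$ is controlled coordinatewise. The only difference is that you spell out the step the paper leaves implicit — constructing the dominating net $y_\alpha=\sup_{\gamma\ge\alpha}p(x_\gamma-x)$ via Dedekind completeness and $p$-boundedness to upgrade coordinatewise convergence to order convergence — which is exactly the right way to justify that claim.
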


\begin{proof}
Let $(X,\mathcal{M})$ be an MNVL, then there is a separating family $\{m_{\lambda}\}_{\lambda\in\Lambda}$ of lattice seminorms on $X$. Let $E=\mathbb{R}^{\Lambda}$ be 
the vector lattice of all real-valued functions on $\Lambda$, and define $p:x\mapsto p_x$ from $X$ into $E_+$ such that $p_x[\lambda]\coloneqq m_{\lambda}(x)$.
		
It is clear that $p$ is a monotone vector norm on $X$. Therefore $(X,p,E)$ is an LNVL. Let $x_\alpha$ be a net in $X$. If $x_\alpha\pc 0$, then ${p_x}_\alpha\oc 0$ in $\mathbb{R}^\Lambda$, and so ${p_x}_\alpha [\lambda]\to 0$ or $m_\lambda(x_\alpha)\to 0$ 
for all $\lambda\in\Lambda$. Hence $x_\alpha\mc 0$.
		
Finally, assume a net $x_\alpha$ to be $p$-bounded. If $x_\alpha\mc 0$, then $m_\lambda(x_\alpha)\to 0$ or  ${p_x}_\alpha [\lambda]\to 0$ for each $\lambda\in\Lambda$. 
Since $x_\alpha$ is $p$-bounded, then ${p_x}_\alpha\oc 0$ in $\mathbb{R}^\Lambda$. That is $x_\alpha\pc 0$.
\end{proof}

Let $X$ be a vector lattice. An element $0\neq e\in X_+$ is called a {\em strong unit} if the ideal $I_e$ generated by $e$ is $X$ or, equivalently, for every $x\ge 0$, 
there exists $n\in\mathbb N$ such that $x\le ne$; a {\em weak unit} if the band $B_e$ generated by $e$ is $X$ or, equivalently, $x\wedge ne\uparrow x$ for every $x\in X_+$. 
If $(X,\tau)$ is a topological vector lattice, then $0\neq e\in X_+$ is called a {\em quasi-interior point} if the principal ideal $I_e$ is $\tau$-dense in $X$
(see Definition 6.1 in \cite{Sch74}). It is known that
\begin{displaymath}
	\text{strong unit}\Rightarrow
	\text{quasi-interior point}\Rightarrow
	\text{weak unit}.
\end{displaymath}
The following proposition characterizes quasi-interior points, and should be compared with \cite[Thm.4.85]{Aliprantis:06}.
	
\begin{prop}\label{um-convergence at m-q.i.p}
Let $(X,\mathcal{M})$ be an MNVL, then the following statements are equivalent$:$
\begin{enumerate}
		\item\label{tqip} $e\in X_+$ is a quasi-interior point;
		\item\label{mqip} for all $x\in X_+$, $x-x\wedge ne\mc 0$ as $n\rightarrow\infty$;
		\item\label{strictly + in *} $e$ is strictly positive on $X^*$, i.e., $0<f\in X^*$ implies $f( e) > 0$, where $X^*$ denotes the topological dual of $X$.
\end{enumerate}
\end{prop}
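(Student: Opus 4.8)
The plan is to prove the cycle $(1)\Rightarrow(2)\Rightarrow(3)\Rightarrow(1)$; this isolates the one genuinely delicate argument, a Hahn--Banach separation that must be upgraded to produce a \emph{positive} functional, in the final implication. For $(1)\Rightarrow(2)$ I fix $x\in X_+$, an index $\lambda\in\Lambda$, and $\varepsilon>0$. Density of $I_e$ lets me choose $y\in I_e$ with $m_\lambda(x-y)<\varepsilon$, and I then replace $y$ by $x\wedge y^{+}\in I_e$, which satisfies $0\le x\wedge y^{+}\le x$ and $x\wedge y^{+}\le y^{+}\le ke$ for some $k\in\mathbb N$. This replacement does not increase the error, since $x-x\wedge y^{+}=(x-y^{+})^{+}\le\abs{x-y}$ (the positive-part map being a lattice contraction) and $m_\lambda$ is monotone. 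Now for every $n\ge k$ we have $y^{+}\le ne$, so $x\wedge y^{+}\le x\wedge ne\le x$, whence $0\le x-x\wedge ne\le x-x\wedge y^{+}$ and therefore $m_\lambda(x-x\wedge ne)\le m_\lambda(x-x\wedge y^{+})<\varepsilon$. As $\lambda$ and $\varepsilon$ are arbitrary, $x-x\wedge ne\mc 0$.

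For $(2)\Rightarrow(3)$ I take $0<f\in X^*$; positivity together with $f\neq 0$ gives some $x\in X_+$ with $f(x)>0$. Because $f$ is $m$-continuous, $(2)$ yields $f(x\wedge ne)\to f(x)>0$. On the other hand $0\le x\wedge ne\le ne$ forces $0\le f(x\wedge ne)\le nf(e)$, so $f(e)=0$ would make every $f(x\wedge ne)$ vanish and hence $f(x)=0$, a contradiction; thus $f(e)>0$.

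The substance of the proposition lies in $(3)\Rightarrow(1)$, which I prove by contraposition. If $I_e$ is not $\tau$-dense, then $\overline{I_e}$ is a proper subspace; since the closure of a solid set in a locally solid lattice is solid \cite{Alip03}, $\overline{I_e}$ is in fact a proper closed ideal, and it contains $e$. Picking $x_0\in X\setminus\overline{I_e}$ — which I may take in $X_+$ because $\overline{I_e}$ is solid — Hahn--Banach (the topology is locally convex and Hausdorff) provides $g\in X^*$ with $g\equiv 0$ on $\overline{I_e}$ and $g(x_0)\neq 0$. The main obstacle is that $g$ need not be positive, and resolving it is where local convexity enters essentially: since $\mathcal M$ consists of lattice seminorms, every order interval $[-u,u]$ is $\tau$-bounded (for $\abs{a}\le u$ one has $m_\lambda(a)\le m_\lambda(u)$), so each element of $X^*$ is order bounded and $X^*$ is an ideal of the order dual $X^\sim$ \cite{Alip03}; in particular $g^{+},g^{-}\in X^*$. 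The Riesz--Kantorovich formula $g^{+}(z)=\sup\{g(w):0\le w\le z\}$ together with the fact that $\overline{I_e}$ is an ideal shows that $g^{+}$ and $g^{-}$ vanish on $\overline{I_e}$. Finally, from $g(x_0)=g^{+}(x_0)-g^{-}(x_0)\neq 0$ at least one of $g^{+},g^{-}$ is nonzero at $x_0$; that one is a nonzero positive functional vanishing on $\overline{I_e}\ni e$, contradicting $(3)$. Hence $\overline{I_e}=X$ and $e$ is a quasi-interior point.
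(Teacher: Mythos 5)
Your proof is correct and follows essentially the same route as the paper: your argument for (1)$\Rightarrow$(2) is a single-approximant version of the paper's net argument (using $x-x\wedge y^{+}=(x-y^{+})^{+}\le\abs{x-y}$ in place of the continuity of the lattice operations), and your (2)$\Rightarrow$(3) and (3)$\Rightarrow$(1) are exactly the adaptations of Theorem 4.85 of \cite{Aliprantis:06} that the paper invokes without writing out. The only substantive difference is that you supply in full the details the paper defers to that reference.
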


\begin{proof}
\eqref{tqip}$\Rightarrow$\eqref{mqip} 
Suppose that $e$ is a quasi-interior point of $X$, then ${\overline{I_e}}^m = X$. Let $x\in X_+$. 
Then $x\in{\overline{I_e}}^m$, so there exists a net $x_\alpha$ in $I_e$ that $m$-converges to $x$. But $x_\alpha\mc x$ implies $\abs{x_\alpha}\mc\abs{x}= x$. 
Moreover, $x_\alpha\wedge x\mc x\wedge x =x$, and $x_\alpha\wedge x\leq x_\alpha$ implies that  $x_\alpha\wedge x\in I$, because $I_e$ is an ideal. 
So we can assume also that $x_\alpha\leq x$. Hence, for any $x\in X_+$, there is a net $0\leq x_\alpha\in I_e$ and $x_\alpha\leq x$. 
Then $0\leq x_\alpha\wedge ne\leq x\wedge ne \leq x$ for all $n\in\mathbb{N}$. Now, take $\lambda\in\Lambda$, and let $\varepsilon>0$, 
then there is $\alpha_\varepsilon$ such that $m_\lambda(x-x_{\alpha_\varepsilon})<\varepsilon$. But $0\leq x_{\alpha_\varepsilon}\in I_e$, 
so $0\leq x_{\alpha_\varepsilon}\leq k_\varepsilon e$ for some $k_\varepsilon\in\mathbb{N}$. 
Since $0\leq x_{\alpha_\varepsilon}=x_{\alpha_\varepsilon}\wedge k_\varepsilon e\leq x\wedge k_\varepsilon e\leq x$, 
then $m_\lambda(x-x\wedge ne)\leq m_\lambda(x-x\wedge k_\varepsilon e)\leq m_\lambda(x-x_\alpha\wedge k_\varepsilon e)=m_\lambda(x-x_{\alpha_\varepsilon})<\varepsilon$ 
for all $n\geq k_\varepsilon$. Hence $m_\lambda(x-x\wedge ne)\to 0$ as $n\to\infty$. Since $\lambda\in\Lambda$ was chosen arbitrary, we get $x-x\wedge ne\mc 0$.

The proofs of the implications \eqref{mqip}$\Rightarrow$\eqref{strictly + in *}, and \eqref{strictly + in *}$\Rightarrow$\eqref{tqip} are similar to the proofs of the corresponding implications of Theorem 4.85 in \cite{Aliprantis:06}. 

\end{proof}

\section{$um$-Topology}

In this section we introduce the $um$-topology in a analogous manner to the $un$-topology \cite{KMT} and $uaw$-topology \cite{Zab}. First we define the $um$-convergence.
\begin{definition}
Let $(X,\mathcal{M})$ be an MNVL, then a net $x_{\alpha}$ is said to be {\em unbounded m-convergent to $x$}, if $\abs{x_\alpha-x}\wedge u\mc 0$ for all $u\in X_{+}$. 
In this case, we say $x_\alpha$ {\em $um$-converges} to $x$ and write $x_{\alpha}\umc x$.
\end{definition}

Clearly, that $um$-convergence is a generalization of $un$-convergence. The following result generalizes \cite[Cor.4.5]{KMT}.

\begin{prop}
If $(X,\mathcal{M})$ is an MNVL possessing the Lebesgue and Levi properties, and $x_\alpha\umc 0$ in $X$, then $x_\alpha\umc 0$ in $X^{**}$.
\end{prop}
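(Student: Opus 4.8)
The plan is to reduce $um$-convergence tested in $X^{**}$ to $um$-convergence tested in $X$ by exploiting the order structure of the canonical embedding $X\hookrightarrow X^{**}$. The crucial structural input is that the Lebesgue and Levi properties together force $X$ to sit as a band in its bidual: the Lebesgue property is the locally-solid analogue of order continuity and makes the canonical image of $X$ an ideal of $X^{**}$, while the Levi property upgrades this ideal to a band (the multi-normed counterpart of the classical fact that a KB-space is a band in its bidual). Since $X^{**}$ is an order dual it is Dedekind complete, so this band is automatically a projection band; I would let $P\colon X^{**}\to X$ denote the corresponding band projection and write, for $w\in(X^{**})_+$, the decomposition $w = Pw + (I-P)w$, where $Pw\in X_+$ and $(I-P)w$ lies in the disjoint complement $X^d$.

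First I would record the order-theoretic reduction. Fix $w\in(X^{**})_+$ and a term $x_\alpha$ of the net, regarded in $X^{**}$. Since $\abs{x_\alpha}\in X$ and $(I-P)w\in X^d$, the elements $\abs{x_\alpha}$ and $(I-P)w$ are disjoint, and as $Pw\perp(I-P)w$ as well, a routine disjointness computation in the Dedekind complete lattice $X^{**}$ gives
\begin{displaymath}
\abs{x_\alpha}\wedge w = \abs{x_\alpha}\wedge\bigl(Pw+(I-P)w\bigr) = \abs{x_\alpha}\wedge Pw .
\end{displaymath}
Setting $u\coloneqq Pw\in X_+$, this identity says that testing the $um$-convergence of $x_\alpha$ against the arbitrary positive element $w$ of the bidual is the same as testing it against the single positive element $u$ of $X$ itself.

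Next I would transfer the convergence across the embedding. By hypothesis $x_\alpha\umc 0$ in $X$, so $\abs{x_\alpha}\wedge u\mc 0$ in $X$. Because the canonical embedding is an $m$-isometric lattice homomorphism (the bidual seminorms restrict to the original seminorms $m_\lambda$ on the image of $X$), $m$-convergence to $0$ of a net lying in $X$ is the same whether computed in $X$ or in $X^{**}$; hence $\abs{x_\alpha}\wedge u\mc 0$ in $X^{**}$. Combining this with the displayed identity yields $\abs{x_\alpha}\wedge w\mc 0$ in $X^{**}$, and as $w\in(X^{**})_+$ was arbitrary, $x_\alpha\umc 0$ in $X^{**}$.

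The main obstacle is the structural claim that $X$ is a band in $X^{**}$, since the clean reduction $\abs{x_\alpha}\wedge w=\abs{x_\alpha}\wedge Pw$ breaks down if $X$ is merely an ideal: an arbitrary positive $w$ need not split into an $X$-part and a disjoint part, so one cannot absorb it into a single test vector from $X$. Establishing the band property requires the multi-normed versions of the order-continuity and KB-space embedding theorems, in which the Lebesgue property supplies the ideal property and the Levi property supplies closure under suprema of $m$-bounded increasing nets, exactly the two ingredients that promote the ideal to a band. The remaining points, namely the Dedekind completeness of $X^{**}$ (so the band is a projection band) and the $m$-isometry of the canonical embedding, are comparatively routine once the bidual MNVL structure has been fixed.
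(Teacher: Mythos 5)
Your proposal is correct and follows essentially the same route as the paper: both rest on the fact that the Lebesgue and Levi properties make $X$ a band, hence (by Dedekind completeness of $X^{**}$) a projection band, in its bidual. The only difference is that where the paper then simply cites \cite[Thm.3(3)]{DEM2} for the transfer of $um$-convergence from a projection band to the ambient space, you prove that transfer directly via the identity $\abs{x_\alpha}\wedge w=\abs{x_\alpha}\wedge Pw$, which is a valid (and correctly executed) unfolding of that citation.
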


\begin{proof}
It follows from Theorem 6.63 of \cite{Alip03} that  $(X,\mathcal{M})$ is $m$-complete and $X$ is a band in $X^{**}$. 
Now, \cite[Thm.2.22]{Alip03} shows that $X^{**}$ is Dedekind complete, and so $X$ is a projection band in $X^{**}$. 
The conclusion follows now from \cite[Thm.3(3)]{DEM2}.
\end{proof}

In a similar way as in \cite[Section 7]{DOT}, one can show that $\mathcal{N}_0$, the collection of all sets of the form 
$$
  V_{\varepsilon,u,\lambda}=\{x\in X: m_{\lambda}(\abs{x}\wedge u)<\varepsilon\} \ \ \ \ (\varepsilon>0, 0\neq u\in X_+, \lambda\in\Lambda) 
$$ 
forms a neighborhood base at zero for some Hausdorff locally solid topology $\tau$ such that, for any net $x_\alpha$ in $X$: $x_\alpha\umc 0$ iff $x_\alpha\tc 0$. 
Thus, the $um$-convergence is topological, and we will refer to its topology as the {\em um-topology}. 

Clearly, if $x_\alpha\mc 0$, then $x_\alpha\umc 0$, and so the $m$-topology, in general, is finer than $um$-topology. 
On the contrary to Theorem 2.3 in \cite{KMT}, the following example provides an MNVL which has a strong unit, yet the $m$-topology and $um$-topology do not agree.
	
\begin{exam}
		Let $X=C[0,1]$. Let $\mathcal{A}:=\{[a,b]\subseteq [0,1]: a<b\}$. For $[a,b]\in\mathcal{A}$ and $f\in X$, let $m_{[a,b]}(f):=\frac{1}{b-a}\int _{a}^{b}\lvert f(t)\rvert dt$. 
		Then $\mathcal{M}=\{m_{[a,b]}:[a,b]\in\mathcal{A}\}$ is a separating family of lattice seminorms on $X$. Thus, $(X,\mathcal{M})$ is an MNVL. 
		For each $2\leq n \in\mathbb{N}$, let 
		\[
		f_n=
\begin{cases}
		n &\text{if } x\in {[0,\frac{1}{n}]},\\
		n^2( 1-n ) x + n^2 &\text{if } x\in {[\frac{1}{n},\frac{1}{n-1}]},\\
		0 &\text{if } x\in{[\frac{1}{n-1},1]}.
\end{cases}
		\]
		So we have
		\[
		f_n\wedge\one=
		\begin{cases}
		1 &\text{if } x\in{[0,\frac{n+1}{n^2}]},\\
		n^2( 1-n ) x + n^2 &\text{if } x\in{[\frac{n+1}{n^2},\frac{1}{n-1}]},\\
		0 &\text{if } x\in{[\frac{1}{n-1},1]}.
		\end{cases}
		\]
		Now, let $0<b\leq 1$, then there is $n_0\in\mathbb{N}$ such that $\frac{1}{n_0 -1 } < b$. So, for $n\geq n_0$, we have $\frac{1}{n-1} < b$, and so 
		we get $m_{[0,b]}(f_n)=\frac{1}{b}(1+\frac{1}{n-1})\to\frac{1}{b}\neq 0$ as $n\to\infty$. Thus, $f_n\not\mc 0$. On the other hand, if $[a,b]\in\mathcal{A}$ 
		then there is $n_0\in\mathbb{N}$ such that $\frac{1}{n_0-1}<b$ so, for $n\geq(n_0-1)$, we have $m_{[a,b]}(f_n\wedge\one)=\frac{1}{b-a}(\frac{n+1}{n^2}+\frac{1}{2n^2(n-1)})\to 0$ 
		as $n\to\infty$. Since $\one$ is a strong unit in $X$ then, by \cite[Cor.5]{DEM2}, $f_n\umc 0$. 
\end{exam}

\section{Metrizabililty of $um$-topology}

The main result in this section is Proposition \ref{metrizable iff countable t.o.s}, which shows that the $um$-topology is metrizable iff the space has a countable topological orthogonal system.

It is well known (cf. \cite[Thm.2.1]{Alip03}) that a topological 
vector space is metrizable iff it has a countable neighborhood base at zero. Furthermore, an MNVL $(X,\mathcal{M})$ is metrizable iff the $m$-topology is generated by a countable family of lattice seminorms, see \cite[Theorem VII.8.2]{Vulikh67}.

Notice that, in an MNVL $(X,\mathcal{M})$ with countable $\mathcal{M}=\{m_k\}_{k\in\mathbb{N}}$, an equivalent translation-invariant metric $\rho_{\mathcal{M}}$ 
can be constructed by the formula
\begin{equation}\label{metric formula}
  \rho_{\mathcal{M}}(x,y)=\sum\limits_{k=1}^{\infty}\frac{m_k(x-y)}{2^k(m_k(x-y)+1)} \ \ \ (x,y\in X).
\end{equation}
Since the function $t\to\frac{t}{t+1}$ is increasing on $[0,\infty)$, $|x|\le|y|$ in $X$ implies that $\rho_{\mathcal{M}}(x,0)\le\rho_{\mathcal{M}}(y,0)$.\\
	
Recall that a collection $\{e_\gamma\}_{\gamma\in\Gamma}$ of positive vectors in a vector lattice $X$ is called an {\em orthogonal system} if $e_\gamma\wedge e_{\gamma'}=0$ 
for all $\gamma\neq\gamma'$. If, moreover, $x\wedge e_\gamma=0$ for all $\gamma\in\Gamma$ implies $x=0$, then $\{e_\gamma\}_{\gamma\in\Gamma}$ is called a {\em maximal orthogonal system}.
It follows from the Zorn's lemma that every vector lattice containing at least one non-zero element has a maximal orthogonal system. 
Next, we recall the following notion.

\begin{definition}\cite[Def.1]{DEM2}
Let $(X,\tau)$ be a topological vector lattice. An orthogonal system $Q=\{e_\gamma\}_{\gamma\in\Gamma}$ of non-zero elements in $X_+$ is said 
to be a {\em topological orthogonal system}, if the ideal $I_Q$ generated by $Q$ is $\tau$-dense in $X$.
\end{definition}

A series $\sum_{i=1}^{\infty} x_i$ in a multi-normed space $(X,\mathcal{M})$ is called {\em absolutely $m$-convergent} if $\sum_{i=1}^{\infty}m_\lambda(x_i)<\infty$ for all $\lambda\in\Lambda$; 
and the series is {\em $m$-convergent}, if the sequence $s_n\coloneqq\sum_{i=1}^{n}x_i$ of partial sums is $m$-convergent. The following lemma can be proven by combining a diagonal argument with the proof of \cite[Prop. 3 in Section 3.3]{Jarchow} and therefore we omit its proof.

\begin{lem}\label{comp. iff -abs conv iff conv}
	A metrizable multi-normed space $(X,\mathcal{M})$ is $m$-complete iff every absolutely $m$-convergent series in $X$ is $m$-convergent. 
\end{lem}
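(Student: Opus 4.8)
The plan is to prove both implications, with the forward direction being routine and the reverse carrying the real content. Throughout I would use the fact, recorded just before the lemma, that metrizability of $(X,\mathcal{M})$ means the $m$-topology is generated by a \emph{countable} family, so I may write $\mathcal{M}=\{m_k\}_{k\in\mathbb{N}}$ and moreover assume $\{m_k\}$ is increasing, since $\mathcal{M}$ is upward directed by the standing MNVL hypothesis. Metrizability also lets me work with sequences throughout, because $m$-completeness coincides with sequential $m$-completeness in a metrizable space.

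For the direction that $m$-completeness forces every absolutely $m$-convergent series to be $m$-convergent: given $\sum_{i=1}^{\infty}x_i$ with $\sum_{i}m_k(x_i)<\infty$ for every $k$, the partial sums $s_n=\sum_{i=1}^{n}x_i$ satisfy $m_k(s_n-s_{n'})\le\sum_{i=n'+1}^{n}m_k(x_i)$ for $n>n'$, so each tail of the convergent series $\sum_{i}m_k(x_i)$ controls $m_k(s_n-s_{n'})$, and $(s_n)$ is $m$-Cauchy. By $m$-completeness it $m$-converges, which is exactly $m$-convergence of the series.

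For the converse — the main point — suppose every absolutely $m$-convergent series is $m$-convergent, and let $(z_n)$ be an $m$-Cauchy sequence. I would extract a subsequence $(z_{n_j})$ whose consecutive differences are summable in every seminorm simultaneously. This is where the diagonal argument enters: using that $(z_n)$ is $m_k$-Cauchy for each fixed $k$, I would choose indices $n_1<n_2<\cdots$ recursively so that $m_k(z_{n_{j+1}}-z_{n_j})<2^{-j}$ for all $k\le j$. Then for each fixed $k$ the tail satisfies $\sum_{j\ge k}m_k(z_{n_{j+1}}-z_{n_j})\le\sum_{j\ge k}2^{-j}<\infty$, so $\sum_{j}m_k(z_{n_{j+1}}-z_{n_j})<\infty$; hence the telescoping series $z_{n_1}+\sum_{j\ge 1}(z_{n_{j+1}}-z_{n_j})$ is absolutely $m$-convergent. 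By hypothesis it $m$-converges, and since its $(m+1)$-th partial sum is exactly $z_{n_{m+1}}$, the subsequence $(z_{n_j})$ $m$-converges to some $z\in X$.

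Finally, an $m$-Cauchy sequence with an $m$-convergent subsequence is itself $m$-convergent to the same limit, by applying the triangle inequality $m_k(z_n-z)\le m_k(z_n-z_{n_j})+m_k(z_{n_j}-z)$ in each seminorm. Thus $z_n\mc z$, which establishes sequential $m$-completeness and hence $m$-completeness. I expect the only delicate step to be the simultaneous choice of the subsequence governing all countably many seminorms at once; the increasing structure of $\{m_k\}$ keeps the recursive ``$k\le j$'' bookkeeping clean, and this is precisely the diagonal argument alluded to in the statement.
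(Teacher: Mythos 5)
Your proof is correct and follows exactly the route the paper intends: the paper omits the proof, saying only that it ``can be proven by combining a diagonal argument with the proof of \cite[Prop.~3 in Section 3.3]{Jarchow}'', and your recursive choice of indices $n_j$ controlling all seminorms $m_k$ with $k\le j$ is precisely that diagonal argument, grafted onto the standard normed-space equivalence of completeness with the convergence of absolutely convergent series. Nothing is missing; the reduction to sequential completeness via metrizability and the final ``Cauchy sequence with convergent subsequence converges'' step are both sound.
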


The following result extends \cite[Thm.3.2]{KMT}.

\begin{prop}\label{metrizable iff countable t.o.s}
Let $(X,\mathcal{M})$ be a metrizable $m$-complete MNVL. Then the following conditions are equivalent$:$

$(i)$   $X$ has a countable topological orthogonal system$;$

$(ii)$  the $um$-topology is metrizable$;$ 

$(iii)$ $X$ has a quasi interior point. 
\end{prop}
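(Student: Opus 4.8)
The plan is to prove the equivalence cyclically, establishing $(iii)\Rightarrow(i)\Rightarrow(ii)\Rightarrow(iii)$, since each step will use the hypotheses (metrizability and $m$-completeness) in a natural way.

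For $(iii)\Rightarrow(i)$, suppose $e$ is a quasi-interior point. I would argue that the singleton $\{e\}$ (or rather a single element built from $e$) already serves as a countable topological orthogonal system: since a single positive vector is trivially an orthogonal system, and the ideal it generates, $I_e$, is $m$-dense in $X$ by the definition of quasi-interior point, the system $Q=\{e\}$ is topological. This direction should be essentially immediate from the definitions.

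For $(i)\Rightarrow(ii)$, let $\{e_k\}_{k\in\mathbb{N}}$ be a countable topological orthogonal system. The strategy is to exhibit a countable neighborhood base at zero for the $um$-topology, which by \cite[Thm.2.1]{Alip03} gives metrizability. Since $\mathcal{M}$ is countable (by metrizability of the $m$-topology, via \cite[Theorem VII.8.2]{Vulikh67}), write $\mathcal{M}=\{m_j\}_{j\in\mathbb{N}}$. The candidate base consists of the sets $V_{\varepsilon,e_k,m_j}$ (with $\varepsilon$ rational) from the description of $\mathcal{N}_0$ in Section 3, indexed over the countable families. The key step is to show that testing $um$-convergence against the generators $e_k$ suffices to test it against an arbitrary $u\in X_+$. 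Here the density of $I_Q$ in the $m$-topology is crucial: for arbitrary $u\in X_+$ one approximates $u$ in $m$-norm by elements of $I_Q$, which are dominated by finite suprema (equivalently, sums, by orthogonality) of multiples of the $e_k$, so that control of $\abs{x}\wedge e_k$ for finitely many $k$ controls $\abs{x}\wedge u$ up to a small $m$-error. This reduces the uncountable family of test vectors $\{u\in X_+\}$ to the countable family $\{e_k\}$, yielding the countable base.

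For $(ii)\Rightarrow(iii)$, which I expect to be the main obstacle, assume the $um$-topology is metrizable and derive the existence of a quasi-interior point. The natural approach is to fix a maximal orthogonal system $\{e_\gamma\}_{\gamma\in\Gamma}$ (which exists by Zorn's lemma) and argue by contradiction that $\Gamma$ can be taken countable, then assemble a quasi-interior point as an $m$-convergent sum $e=\sum_k 2^{-k}\tfrac{e_k}{m\text{-scale}}$ of a countable subfamily, using Lemma \ref{comp. iff -abs conv iff conv} together with $m$-completeness to guarantee the series $m$-converges. The delicate point is proving that metrizability of the $um$-topology forces the relevant orthogonal system to be countable: if it were uncountable, one would construct, for each index, a test neighborhood $V_{\varepsilon,e_\gamma,\lambda}$ that cannot be captured by any countable base, contradicting the countable neighborhood base guaranteed by metrizability. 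Making this incompatibility precise — showing that distinct orthogonal generators genuinely require separate basic neighborhoods and cannot be simultaneously subsumed under countably many — is where the argument must be handled carefully, and it is the step I would devote the most attention to, likely following the template of \cite[Thm.3.2]{KMT} adapted from the normed to the multi-normed setting.
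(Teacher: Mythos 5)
Your steps $(iii)\Rightarrow(i)$ and $(i)\Rightarrow(ii)$ are sound. The first is the paper's ``trivial'' implication. For the second, the paper simply cites \cite[Prop.5]{DEM2}, whereas you sketch a direct proof; your reduction of an arbitrary test vector $u\in X_+$ to the generators $e_k$ (approximate $u$ from below by $v\in I_Q$ with $m_\lambda(u-v)$ small, dominate $v$ by a finite sum $\sum_{k\in F}n_ke_k$, and use $|x|\wedge u\le |x|\wedge v+(u-v)$ together with $|x|\wedge n_ke_k\le n_k(|x|\wedge e_k)$) is correct and yields the countable base.

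The genuine gap is in $(ii)\Rightarrow(iii)$. You propose to show that a maximal orthogonal system can be taken countable and then to declare $e=\sum_k 2^{-k}e_k/c_k$ a quasi-interior point. Even granting the countability step, this construction only produces a \emph{weak unit}: maximality of the orthogonal system gives order density of the ideal it generates, not $m$-density, and the two notions differ precisely when the Lebesgue property fails (the paper's Theorem \ref{um-completeness} has to invoke the Lebesgue property to upgrade a weak unit to a quasi-interior point, and the proposition immediately following Proposition \ref{metrizable iff countable t.o.s} shows that under the weaker hypothesis that the $um$-topology is merely stronger than a metric topology, essentially your construction yields only a weak unit). The paper's argument avoids this by building $e$ not from an orthogonal system but from the test vectors of the metric itself: choose basic neighborhoods $V_{\varepsilon_n,u_n,k_n}\subseteq B_{um}(0,\frac{1}{n})$, normalize so that $\sum_n\frac{u_n}{2^nc_n}$ is absolutely $m$-convergent (hence $m$-convergent to some $e$ by Lemma \ref{comp. iff -abs conv iff conv} and $m$-completeness), and then verify the topological criterion for quasi-interiority directly: if $x_\alpha\ge 0$ and $x_\alpha\wedge e\mc 0$, then $x_\alpha\wedge u_n\le 2^nc_n(x_\alpha\wedge e)\mc 0$ for every $n$, so $x_\alpha$ eventually enters each $V_{\varepsilon_n,u_n,k_n}$ and hence each $B_{um}(0,\frac{1}{n})$, i.e.\ $x_\alpha\umc 0$. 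This link between $e$ and a countable $um$-neighborhood base is exactly what your orthogonal-system construction does not provide, so as written the implication $(ii)\Rightarrow(iii)$ does not go through.
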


\begin{proof}
Since $(X,\mathcal{M})$ is metrizable, we may suppose that $\mathcal{M}=\{m_k\}_{k\in\mathbb{N}}$ is countable and directed.

$(i) \Rightarrow (ii)$ 
It follows directly from \cite[Prop.5]{DEM2}. Notice also that a metric $d_{um}$ of the $um$-topology can be constructed by the following formula:
\begin{equation}\label{um-metric}
	d(x,y)=\sum_{k,n=1}^\infty \frac{1}{2^{k+n}}\cdot\frac{m_k(|x-y|\wedge e_n)}{1+m_k(|x-y|\wedge e_n)},
\end{equation}
where $\{e_n\}_{n\in\mathbb{N}}$ is a countable topological orthogonal system for $X$. 
		
$(ii)\Rightarrow(iii)$ 
Assume that the $um$-topology is generated by a metric $d_{um}$ on $X$. For each $n\in\mathbb{N}$, 
let $B_{um}(0,\frac{1}{n})=\lbrace x\in X:d_{um}(x,0)<\frac{1}{n}\rbrace$. Since the $um$-topology is metrizable, then, for each $n\in\mathbb{N}$, 
there are $k_n\in\mathbb{N}, 0<u_n\in X_+$, and $\varepsilon_n>0$ such that $V_{\varepsilon_n,u_n,k_n}\subseteq B_{um}(0,\frac{1}{n})$, where 
$$
  V_{\varepsilon,u_n,k}=\{x\in X: m_{k}(|x|\wedge u_n)<\varepsilon\}.
$$ 
Notice that $\{V_{\varepsilon,u_n,k}\}_{\varepsilon>0,n,k\in\mathbb{N}}$ is a base at zero of the $um$-topology on $X$.

Let $B_m(0,1)=\lbrace x\in X:d_m(x,0)<1\rbrace$, 
where $d_m$ is the metric generating the $m$-topology. There is a zero neighborhood $V$ in the $m$-topology such that $V\subseteq B_m(0,1)$. 
Since $V$ is absorbing, then, for every $n\in\mathbb{N}$, there is $c_n\geq 1$ such that $\frac{1}{c_n}u_n\in V$. 
Thus $\frac{1}{c_n}u_n\in V\subseteq B_m(0,1)$ for each $n\in\mathbb{N}$. 
Hence, the sequence $\frac{1}{c_n}u_n$ is $d_m$-bounded and so it is bounded with respect to the multi-norm $\mathcal{M}=\{m_k\}_{k\in\mathbb{N}}$. Let 
\begin{equation}\label{e prop7}
		e\coloneqq\sum_{n=1}^\infty\dfrac{u_n}{2^n c_n}.
\end{equation}
Fix $k\in\mathbb{N}$. Since the sequence $\frac{u_n}{c_n}$ is bounded with respect to $\mathcal{M}$, there exists $r_k\in\mathbb{R}_+$ 
such that $m_k(\frac{u_n}{c_n})\leq r_k<\infty$ for all $n\in\mathbb{N}$. Hence, 
\begin{align*}
		\sum_{n=1}^\infty m_k\bigg(\dfrac{u_n}{2^n c_n}\bigg)=\sum_{n=1}^{\infty}\dfrac{1}{2^n}m_k\bigg(\frac{u_n}{c_n}\bigg) 
		&\leq r_k\sum_{n=1}^\infty\dfrac{1}{2^n}<\infty.
\end{align*}
Thus, the series $\sum_{n=1}^\infty\dfrac{u_n}{2^n c_n}$ is absolutely $m$-convergent. 
Since $X$ is $m$-complete, Lemma \ref{comp. iff -abs conv iff conv} assures that the series $\sum_{n=1}^\infty\dfrac{u_n}{2^n c_n}$ is $m$-convergent to some $e\in X$.
		
Now, we use Theorem 2 in \cite{DEM2} to show that $e$ is a quasi-interior point in $X$. Let $x_\alpha$ be a net in $X_+$ such that $x_\alpha\wedge e\mc0$. 
Our aim is to show that $x_\alpha\umc 0$. Since 
$$
  x_\alpha\wedge u_n\leq2^n c_n x_\alpha\wedge2^n c_n e=2^nc_n(x_\alpha\wedge e)\mc 0 \ \ \ \ (\alpha\to\infty),
$$ 
then $x_\alpha\wedge u_n\mc0$ for all $n\in\mathbb{N}$. In particular, $m_{k_n}(x_\alpha\wedge u_n)\to0$. Thus, there exists $\alpha_n$ such that  
$m_{k_n}(x_\alpha\wedge u_n)<\varepsilon_n$ for all $\alpha\geq\alpha_n$. That is $x_\alpha\in V_{\varepsilon_n,u_n,k_n}$ for all $\alpha\geq\alpha_n$, 
which implies $x_\alpha\in B_{um}(0,\frac{1}{n})$. Therefore, $x_\alpha\xrightarrow{\mathrm{d_{um}}}0$ and so $x_\alpha\umc0$. 
Hence, $e$ is a quasi interior point.

$(iii)\Rightarrow(i)$ It is trivial. 
\end{proof}

Similar to \cite[Prop.3.3]{KMT}, we have the following result.
	
\begin{prop}
Let $(X,\mathcal{M})$ be an $m$-complete metrizable MNVL. The $um$-topology is stronger than a metric topology iff $X$ has a weak unit.
\end{prop}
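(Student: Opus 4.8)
The plan is to prove both implications, mirroring the series construction used in the proof of Proposition~\ref{metrizable iff countable t.o.s} but producing a weak unit in place of a quasi-interior point. Since $(X,\mathcal M)$ is metrizable, I again assume $\mathcal M=\{m_k\}_{k\in\mathbb N}$ is countable and directed, and I read ``stronger than a metric topology'' as: the $um$-topology is finer than some metrizable linear topology on $X$.

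\emph{Sufficiency} (weak unit $\Rightarrow$ stronger than a metric topology). Given a weak unit $e$, I would retain from the $um$-base $\mathcal N_0$ only the sets with $u=e$, i.e.\ consider $\{V_{\varepsilon,e,k}:\varepsilon>0,\ k\in\mathbb N\}$. Using the directedness of $\mathcal M$, the inequality $(\abs{x}+\abs{y})\wedge e\le \abs{x}\wedge e+\abs{y}\wedge e$, and the monotonicity of each $m_k$, I would verify that this countable family is a base at zero for a locally solid linear topology $\sigma_e$: each set is solid (hence balanced) and absorbing, and $V_{\varepsilon/2,e,k}+V_{\varepsilon/2,e,k}\subseteq V_{\varepsilon,e,k}$. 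It is Hausdorff because $\bigcap_{\varepsilon,k}V_{\varepsilon,e,k}=\{x:\abs{x}\wedge e=0\}$, which equals $\{0\}$ since $\mathcal M$ is separating and $e$ is a weak unit (so $\abs{x}\wedge e=0$ forces $x=0$). Being generated by a countable base at zero, $\sigma_e$ is metrizable, and since every $V_{\varepsilon,e,k}$ is a $um$-neighborhood of zero, $\sigma_e$ is weaker than the $um$-topology. This is the routine direction.

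\emph{Necessity} (stronger than a metric topology $\Rightarrow$ weak unit). Suppose the $um$-topology is finer than a metrizable linear topology $\sigma$, and let $\{W_n\}_{n\in\mathbb N}$ be a countable base at zero for $\sigma$. Since $\mathcal N_0$ consists of finite intersections of the $V_{\varepsilon,u,\lambda}$, I would first collapse, for each $n$, such a finite intersection contained in $W_n$ into a single basic set: taking $u_n$ to be the join of the finitely many weights and $k_n$ an index dominating the finitely many seminorms (directedness), one obtains $0\neq u_n\in X_+$, $\varepsilon_n>0$, and $k_n$ with $V_{\varepsilon_n,u_n,k_n}\subseteq W_n$. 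Next, as in the proof of Proposition~\ref{metrizable iff countable t.o.s}, I would rescale: choose $c_n\ge 1$ with $c_n\ge m_j(u_n)$ for all $j\le n$, so that $\sum_n 2^{-n}c_n^{-1}m_k(u_n)<\infty$ for every $k$; then $\sum_n u_n/(2^n c_n)$ is absolutely $m$-convergent, and by $m$-completeness and Lemma~\ref{comp. iff -abs conv iff conv} it $m$-converges to some $e\in X_+$, with $e\ge u_n/(2^n c_n)$ for all $n$ (the positive cone being $m$-closed).

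It remains to show $e$ is a weak unit, which I expect to be the crux. Take $x\in X_+$ with $x\wedge e=0$. For each $n$, since $2^n c_n\ge 1$ one has $x\wedge(2^n c_n e)\le 2^n c_n(x\wedge e)=0$, and because $u_n\le 2^n c_n e$ this yields $\abs{x}\wedge u_n=x\wedge u_n=0$; hence $m_{k_n}(\abs{x}\wedge u_n)=0<\varepsilon_n$, so $x\in V_{\varepsilon_n,u_n,k_n}\subseteq W_n$. Thus $x\in\bigcap_n W_n=\{0\}$, the last equality holding because $\sigma$ is Hausdorff with base at zero $\{W_n\}$. Therefore $x=0$, so no nonzero positive element is disjoint from $e$, i.e.\ $e$ is a weak unit. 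The only delicate points are the passage from a finite intersection to a single $V_{\varepsilon_n,u_n,k_n}$ and the deduction $x\wedge u_n=0$ from $x\wedge e=0$; both reduce to monotonicity of $\wedge$ and of the $m_k$, exactly as above.
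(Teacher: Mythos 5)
Your proof is correct and takes essentially the same route as the paper's: the necessity direction reproduces the construction $e=\sum_{n}u_n/(2^nc_n)$ from the proof of Proposition~\ref{metrizable iff countable t.o.s} together with the same disjointness argument ($x\wedge e=0\Rightarrow x\wedge u_n=0\Rightarrow x$ lies in every ball, so $x=0$), differing only in the harmless detail of how the normalizing constants $c_n$ are chosen. For sufficiency you simply unfold the result the paper cites from \cite{DEM2} by exhibiting the countable base $\{V_{\varepsilon,e,k}\}$, which is a valid direct verification.
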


\begin{proof}
The sufficiency follows from \cite[Prop.6]{DEM2}.

For the necessity, suppose that the $um$-topology is stronger than the topology generated by a metric $d$. 
Let $e$ be as in (\ref{e prop7}) above. Assume $x\wedge e=0$. Since $e\geq\frac{u_n}{2^n c_n}$ for all $n\in\mathbb{N}$, 
we get $x\wedge\frac{u_n}{2^n c_n}=0$, and hence $x\wedge u_n=0$ for all $n$. Then $x\in V_{\varepsilon_n,u_n,k_n}$ for all $n$,
and $x\in B(0,\frac{1}{n})=\lbrace x\in X: d(x,0)<\frac{1}{n}\rbrace$ for each $n\in\mathbb{N}$. So $x=0$, which means that 
$e$ is a weak unit.
\end{proof}

\section{$um$-Completeness}

A subset $A$ of an MNVL $(X,\mathcal{M})$ is said to be {\em $($sequentially$)$ $um$-complete} if, it is (sequentially) complete in the $um$-topology.
In this section, we characterize $um$-complete subsets of $X$ in terms of the Lebesgue and Levi properties. We begin with the following technical lemma.  

\begin{lem}\label{um-closure is m-bdd}
Let  $(X,\mathcal{M})$ be an MNVL, and $A\subseteq X$ be $m$-bounded, then $\overline{A}^{um}$ is $m$-bounded.
\end{lem}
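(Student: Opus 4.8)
The plan is to prove the slightly stronger quantitative statement that $\sup_{x\in\overline{A}^{um}} m_\lambda(x) \le \sup_{a\in A} m_\lambda(a)$ for every $\lambda\in\Lambda$; this is exactly $m$-boundedness of $\overline{A}^{um}$. Write $M_\lambda \coloneqq \sup_{a\in A} m_\lambda(a)$, which is finite for every $\lambda$ precisely because $A$ is $m$-bounded. Fix $x\in\overline{A}^{um}$ and $\lambda\in\Lambda$. Since $um$-convergence is topological (it is the convergence associated with the $um$-topology), membership in the $um$-closure is witnessed by a net: there is a net $a_\alpha$ in $A$ with $a_\alpha\umc x$, that is, $\abs{a_\alpha-x}\wedge u\mc 0$ for every $u\in X_+$.

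The decisive step is to dominate $m_\lambda(x)$ by $M_\lambda$ up to a term that vanishes along the net. First I would combine the triangle inequality $\abs{x}\le\abs{a_\alpha}+\abs{x-a_\alpha}$ with monotonicity of the lattice operation and the elementary Riesz-space inequality $w\wedge(p+q)\le w\wedge p+w\wedge q$ (valid for $w,p,q\in X_+$), applied with $w=\abs{x}$, $p=\abs{a_\alpha}$, $q=\abs{x-a_\alpha}$, to get
\[
\abs{x}=\abs{x}\wedge\abs{x}\le \abs{x}\wedge\bigl(\abs{a_\alpha}+\abs{x-a_\alpha}\bigr)\le \abs{x}\wedge\abs{a_\alpha}+\abs{x}\wedge\abs{x-a_\alpha}.
\]
Applying the lattice seminorm $m_\lambda$, which is subadditive and monotone, then yields
\[
m_\lambda(x)=m_\lambda(\abs{x})\le m_\lambda\bigl(\abs{x}\wedge\abs{a_\alpha}\bigr)+m_\lambda\bigl(\abs{x}\wedge\abs{x-a_\alpha}\bigr).
\]

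Now I estimate the two summands. For the first, monotonicity of $m_\lambda$ together with $\abs{x}\wedge\abs{a_\alpha}\le\abs{a_\alpha}$ gives $m_\lambda(\abs{x}\wedge\abs{a_\alpha})\le m_\lambda(a_\alpha)\le M_\lambda$ for every $\alpha$. For the second, this is where I would exploit the freedom in the definition of $um$-convergence: choosing the test element $u=\abs{x}\in X_+$ in $a_\alpha\umc x$ gives $\abs{x-a_\alpha}\wedge\abs{x}\mc 0$, hence $m_\lambda(\abs{x}\wedge\abs{x-a_\alpha})\to 0$. Consequently, for every $\varepsilon>0$ there is an index $\alpha$ with $m_\lambda(x)<M_\lambda+\varepsilon$, so $m_\lambda(x)\le M_\lambda$. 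Since $x$ and $\lambda$ were arbitrary, $\overline{A}^{um}$ is $m$-bounded.

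The seminorm manipulations are routine; the two ideas carrying the argument are the choice $u=\abs{x}$, which converts $um$-convergence into control of $\abs{x}\wedge\abs{x-a_\alpha}$, and the meet-subadditivity inequality that splits $\abs{x}$ into an $A$-controlled piece and a vanishing piece. I do not expect a genuine obstacle here; the only point requiring care is that the $um$-closure must be described by \emph{nets} rather than sequences, which is legitimate because the $um$-topology is a bona fide topology, so no metrizability of the $um$-topology is tacitly assumed.
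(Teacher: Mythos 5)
Your proof is correct and follows essentially the same route as the paper's: both take a net $a_\alpha$ in $A$ with $a_\alpha\umc x$, test $um$-convergence against $u=\abs{x}$, and split $\abs{x}$ into a piece dominated by $\abs{a_\alpha}$ (controlled by $\sup_{a\in A}m_\lambda(a)$) plus the vanishing piece $\abs{x-a_\alpha}\wedge\abs{x}$. The only cosmetic difference is that you make the meet-subadditivity inequality $w\wedge(p+q)\le w\wedge p+w\wedge q$ explicit where the paper leaves it implicit.
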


\begin{proof}
Given $\lambda\in\Lambda$, then $M_{\lambda}=\sup_{a\in A} m_\lambda(a)<\infty$. Let $x\in\overline{A}^{um}$, then there is a net $a_\alpha$ in $A$ such that $a_\alpha\umc x$. 
So $m_\lambda(\abs{a_\alpha-x}\wedge u)\rightarrow 0$ for any $u\in X_+$. In particular,
\begin{align*}
		m_\lambda(\vert x\vert)&=m_\lambda(\vert x\vert\wedge\vert x\vert)=m_\lambda(\vert x-a_\alpha+a_\alpha\vert\wedge\vert x\vert)\leq\\
		& m_\lambda(\vert x-a_\alpha\vert\wedge\vert x\vert)+\sup_{a\in A}m_\lambda(a)=m_\lambda(\vert x-a_\alpha\vert\wedge\vert x\vert)+M_{\lambda}.
\end{align*}
Letting $\alpha\rightarrow\infty$, we get $m_\lambda(x)=m_\lambda(\vert x\vert)\leq M_{\lambda}<\infty$ for all $x\in\overline{A}^{um}$.
\end{proof}
	
\begin{thm}\label{um-completeness}
Let $(X,\mathcal{M})$ be an MNVL and let $A$ be an $m$-bounded and $um$-closed subset in $X$. If $X$ has the Lebesgue and Levi properties, then $A$ is $um$-complete.
\end{thm}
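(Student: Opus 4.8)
The plan is to show that every $um$-Cauchy net in $A$ possesses a $um$-limit in $X$; since $A$ is $um$-closed, such a limit automatically lies in $A$, which is precisely $um$-completeness. Recall first that, by Theorem 6.63 of \cite{Alip03}, the Lebesgue and Levi properties force $(X,\mathcal{M})$ to be $m$-complete, while the Levi property supplies suprema of increasing $m$-bounded nets; these are the two order-theoretic ingredients I will use. A $um$-Cauchy net $x_\alpha$ in $A$ need not be positive, so I would first pass to the positive and negative parts: from $\abs{x_\alpha^{+}-x_\beta^{+}}\le\abs{x_\alpha-x_\beta}$ (and likewise for the negative parts) it follows that $x_\alpha^{+}$ and $x_\alpha^{-}$ are again $um$-Cauchy, and $m_\lambda(x_\alpha^{\pm})\le m_\lambda(x_\alpha)\le M_\lambda:=\sup_{a\in A}m_\lambda(a)<\infty$ shows they remain $m$-bounded. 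Since the $um$-topology is linear, it suffices to produce $um$-limits of positive $um$-Cauchy $m$-bounded nets and then subtract.

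So let $x_\alpha\ge 0$ be $um$-Cauchy and $m$-bounded. For a fixed $u\in X_+$ the truncations satisfy $\abs{x_\alpha\wedge u-x_\beta\wedge u}\le\abs{x_\alpha-x_\beta}\wedge u$, so $m_\lambda(x_\alpha\wedge u-x_\beta\wedge u)\to 0$ and $x_\alpha\wedge u$ is $m$-Cauchy; by $m$-completeness it $m$-converges to some $y_u$, and since the order interval $[0,u]$ is $m$-closed we get $0\le y_u\le u$. The family $\{y_u\}_{u\in X_+}$ is increasing in $u$ and satisfies $m_\lambda(y_u)\le M_\lambda$, so the Levi property yields $y:=\sup_{u}y_u\in X_+$ with $y_u\uparrow y$. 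A short argument identifies these limits: for $u'\ge u$ the $m$-continuity of $z\mapsto z\wedge u$ gives $y_{u'}\wedge u=y_u$, and letting $u'\uparrow$ while using the order-continuity of $z\mapsto z\wedge u$ on $y_{u'}\uparrow y$ gives $y\wedge u=y_u$. Hence $x_\alpha\wedge u\mc y\wedge u$ for every $u\in X_+$.

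It remains to upgrade this ``truncated'' $m$-convergence to genuine $um$-convergence to $y$, i.e.\ to show $m_\lambda(\abs{x_\alpha-y}\wedge w)\to 0$ for every $w\in X_+$. I would split $\abs{x_\alpha-y}\wedge w\le (x_\alpha-y)^{+}\wedge w+(y-x_\alpha)^{+}\wedge w$. The deficit is easy: $(y-x_\alpha)^{+}\le\abs{y-x_\alpha\wedge y}$ and $x_\alpha\wedge y\mc y\wedge y=y$, so $m_\lambda((y-x_\alpha)^{+}\wedge w)\to 0$. The excess is the crux, and is handled by the distributive-lattice identity $(x_\alpha-y)^{+}\wedge w=(x_\alpha\wedge(y+w)-y)^{+}$; bounding its right-hand side by $\abs{x_\alpha\wedge(y+w)-y\wedge(y+w)}$ and invoking truncated convergence at the level $u=y+w$ gives $m_\lambda((x_\alpha-y)^{+}\wedge w)\to 0$. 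Therefore $x_\alpha\umc y$; subtracting the limits of the positive and negative parts produces the $um$-limit of the original net, which lies in $A$ by $um$-closedness.

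The main obstacle is the excess term, which carries the genuinely ``unbounded'' information and is exactly where truncation at a fixed level fails, since truncating both $x_\alpha$ and $y$ low can collapse a large difference to zero. The resolution is to truncate at the limit-dependent level $y+w$, made rigorous by the identity $(x_\alpha-y)^{+}\wedge w=(x_\alpha\wedge(y+w)-y)^{+}$ together with the identification $y_u=y\wedge u$; these two facts are what allow the $m$-bounded, order-complete structure furnished by Lebesgue and Levi to convert truncated $m$-convergence into $um$-convergence.
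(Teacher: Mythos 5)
Your proof is correct, but it takes a genuinely different route from the paper's. The paper argues in two cases: when $X$ has a weak unit $e$ (hence, by the Lebesgue property, a quasi-interior point), it truncates only at the levels $ke$, $k\in\mathbb{N}$, obtains $m$-limits $y_k$ of $(x_\alpha\wedge ke)_\alpha$, applies the Levi property to the increasing $m$-bounded sequence $y_k$ to get $y_k\mc y$, and verifies $um$-convergence by testing against $e$ alone; when $X$ has no weak unit, it decomposes along a maximal orthogonal system, applies the first case inside each band generated by finitely many $e_\gamma$'s, and glues the local limits $z_\delta$ by another application of Levi (the gluing step is left as an exercise with a pointer to \cite{GaoX:14}). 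You avoid both the case distinction and the band machinery by truncating at \emph{every} $u\in X_+$ simultaneously, applying Levi to the increasing $m$-bounded net $\{y_u\}_{u\in X_+}$, and then upgrading truncated $m$-convergence to $um$-convergence via the identification $y_u=y\wedge u$ together with the identities $(y-x_\alpha)^{+}=y-x_\alpha\wedge y$ and $(x_\alpha-y)^{+}\wedge w=\bigl(x_\alpha\wedge(y+w)-y\bigr)^{+}$, all of which check out. Your version is more self-contained --- no appeal to quasi-interior points, projection bands, or external results --- and it also supplies the reduction to positive nets that the paper merely asserts as a ``without loss of generality''; the paper's version, in exchange, parallels the known Banach-lattice proof of \cite{KMT} and exhibits the localization-to-bands structure that the paper reuses later for $um$-compactness.
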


\begin{proof}
Suppose that $x_\alpha$ is $um$-Cauchy in $A$, then, without lost of generality, we may assume that $x_\alpha$ consists of positive elements.\\
Case (1): 
If $X$ has a weak unit $e$, then $e$ is a quasi-interior point, by the Lebesgue property of $X$ and Proposition \ref{um-convergence at m-q.i.p}. 
Note that, for each $k\in\mathbb{N}$,
\begin{equation*}
		\abs{ x_\alpha\wedge ke-x_{\beta}\wedge ke}\leq\abs{ x_\alpha-x_\beta}\wedge ke,
\end{equation*}
hence the net $(x_\alpha\wedge ke)_\alpha$ is $m$-Cauchy in $X$. Now, \cite[Thm.6.63]{Alip03} assures that $X$ is $m$-complete, 
and so the net $(x_\alpha\wedge ke)_\alpha$ is $m$-convergent to some $y_k\in X$. Given $\lambda\in\Lambda$. Then
\begin{align*}
		m_\lambda(y_k)&=m_\lambda( y_k-x_\alpha\wedge ke+x_\alpha\wedge ke) \\
		&\leq m_\lambda( y_k-x_\alpha\wedge ke) +m_\lambda( x_\alpha) \\
		&\leq m_\lambda( y_k-x_\alpha\wedge ke)+\sup_\alpha m_\lambda(x_\alpha).
\end{align*}
Taking limit over $\alpha$, we get $m_\lambda(y_k)\leq\sup_\alpha m_\lambda(x_\alpha)<\infty$. Hence the sequence $y_k$ is $m$-bounded in $X$.	
Note also that  $y_k$ is increasing in $X$, but $X$ has the Lebesgue and Levi properties, so, by \cite[Thm.6.63]{Alip03}, $y_k$ $m$-converges to some $y\in X$. \\
		
It remains to show that $y$ is the $um$-limit of $x_\alpha$. Given $\lambda\in\Lambda$. Note that, by Birkhoff's inequality,
\begin{align*}
		\abs{x_\alpha\wedge ke-x_\beta\wedge ke}\wedge e &\leq\abs{x_\alpha-x_\beta}\wedge e.
\end{align*}
Thus 
\begin{equation*}
		m_\lambda(\abs{x_\alpha\wedge ke-x_\beta\wedge ke}\wedge e )\leq m_\lambda (\abs{x_\alpha-x_\beta}\wedge e).
\end{equation*}
Taking limit over $\beta$, we get 
\begin{equation*}
		m_\lambda(\abs{x_\alpha\wedge ke-y_k}\wedge e )\leq\lim_\beta m_\lambda (\abs{x_\alpha-x_\beta}\wedge e).
\end{equation*}
Now taking limit over $k$, we have 
\begin{equation*}
		m_\lambda(\abs{x_\alpha-y}\wedge e )\leq\lim_\beta m_\lambda(\abs{x_\alpha-x_\beta}\wedge e).
\end{equation*}
Finally, as $x_\alpha$ is $um$-Cauchy, taking limit over $\alpha$, yields
\begin{equation*}
		\lim_{\alpha} m_\lambda(\abs{x_\alpha-y}\wedge e )\leq\lim_{\alpha,\beta} m_\lambda(\abs{x_\alpha-x_\beta}\wedge e)=0.
\end{equation*}
Thus, $x_\alpha\umc y$ and, since $A$ is $um$-closed, $y\in A$.\\
		
Case (2): 
If $X$ has no weak unit. Let $\{e_\gamma\}_{\gamma\in\Gamma}$ be a maximal orthogonal system  in $X$. Let $\Delta$ be the collection of all finite subsets of $\Gamma$. 
For each $\delta\in\Delta, ~\delta=\{\gamma_1,\gamma_2,\dots,\gamma_n\}$, consider the band $B_\delta$ generated by $\{  e_{\gamma_1}, e_{\gamma_1},\dots,e_{\gamma_n}\}$. 
It follows from \cite[Thm.3.24]{Alip03}  that  $B_\delta$ is a projection band. Then  $B_\delta$ is an $m$-complete MNVL in its own right. 
Moreover, the $m$-topology restricted to $B_\delta$ possesses the Lebesgue and Levi properties.	Note that $B_\delta$ has a weak unit, namely $e_{\gamma_1}+e_{\gamma_2}+\dots+e_{\gamma_n}$. 
Let $P_\delta$ be the band projection corresponding to $B_\delta$.\\
For $\delta\in\Delta$, since $x_\alpha$ is $um$-Cauchy in $X$ and $P_\delta$ is a band projection, then $P_\delta x_\alpha$ is $um$-Cauchy in $B_\delta$. 
Lemma \ref{um-closure is m-bdd} assures that $\overline{P_\delta (A)}^{um}$ is $m$-bounded in $B_\delta$. Thus, by Case (1), there is $z_\delta\in B_\delta$ such that 
\begin{equation*}
		P_\delta x_\alpha\umc z_\delta\geq 0~\text{in~} B_\delta \ \ \ \ (\alpha\to\infty).
\end{equation*}
Since $B_\delta$ is a projection band, then $P_\delta x_\alpha\umc z_\delta\geq 0~\text{in~} X~~~(\text{over~}\alpha)$. 
It is easy to see that $0\leq z_\delta\uparrow$, and $z_\delta$ is $m$-bounded. Since $X$ has the Lebesgue and Levi properties, it follows from \cite[Thm.6.63]{Alip03}, 
that there is $z\in X_+$ such that $z_\delta\mc z$, and so $z_\delta\uparrow z$. It remains to show that $x_\alpha\umc z$. The argument is similar to the proof of 
\cite[Thm.4.7]{GaoX:14}, and we leave it as an exercise. Since $A$ is $um$-closed, then $z\in A$ and so $A$ is $um$-complete.
\end{proof}
	
The following lemma and its proof are analogous to Lemma 1.2 in \cite{KMT}.

\begin{lem}\label{KMT 2 Lemmas} 
Let $(X,\mathcal{M})$ be an MNVL. If $x_\alpha$ is an increasing net in $X$ and $x_\alpha\umc x$, then $x_\alpha\uparrow x$ and $x_\alpha\mc x$.
\end{lem}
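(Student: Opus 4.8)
The plan is to split the conclusion into three claims and prove them in order: (a) $x$ is an upper bound of the net, (b) $x$ is the \emph{least} upper bound, so that $x_\alpha\uparrow x$, and (c) $x_\alpha\mc x$. All three rest on one mechanism: for a well-chosen $u\in X_+$ I would locate a cofinal set of indices on which $\abs{x_\alpha-x}\wedge u$ is \emph{constant}, and then play this against the fact that $x_\alpha\umc x$ forces $m_\lambda(\abs{x_\alpha-x}\wedge u)\to 0$ for every $\lambda$, so the constant must be $0$; separation of $\mathcal M$ then kills the relevant positive vector.

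For (a) I would fix an index $\beta$ and set $w:=(x_\beta-x)^+\in X_+$. Since the net increases, $x_\beta\le x_\alpha$ for all $\alpha\ge\beta$, whence $(x_\beta-x)^+\le(x_\alpha-x)^+\le\abs{x_\alpha-x}$, and therefore $\abs{x_\alpha-x}\wedge w=w$ for every $\alpha\ge\beta$. Because $m_\lambda(\abs{x_\alpha-x}\wedge w)\to 0$ while this quantity is the constant $m_\lambda(w)$ along the tail $\alpha\ge\beta$, we get $m_\lambda(w)=0$ for all $\lambda$, so $w=0$, i.e.\ $x_\beta\le x$. As $\beta$ was arbitrary, $x$ dominates the net. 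Part (b) is the same argument turned around: given any upper bound $y$ of $x_\alpha$, put $v:=(x-y)^+$; using $x_\alpha\le y$ together with the $x_\alpha\le x$ just established, one checks $v\le(x-x_\alpha)^+=x-x_\alpha=\abs{x-x_\alpha}$ for every $\alpha$, so $\abs{x-x_\alpha}\wedge v=v$ along the whole net and $um$-convergence forces $v=0$, i.e.\ $x\le y$. Hence $x=\sup_\alpha x_\alpha$ and $x_\alpha\uparrow x$.

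Part (c) is where I expect the only genuine subtlety. Having shown $x_\alpha\uparrow x$, I have $x-x_\alpha\downarrow 0$, but order convergence does not in general imply $m$-convergence absent a Lebesgue-type hypothesis, so I cannot simply read off $x_\alpha\mc x$. The observation that rescues the argument is that, after fixing a single index $\alpha_0$, the tail becomes order bounded: for $\alpha\ge\alpha_0$ one has $0\le x-x_\alpha\le x-x_{\alpha_0}=:u\in X_+$, so the truncation by this particular $u$ is vacuous, $\abs{x-x_\alpha}\wedge u=\abs{x-x_\alpha}$. Invoking the definition of $um$-convergence with exactly this $u$ then yields $m_\lambda(x-x_\alpha)\to 0$ along the tail for every $\lambda$, and since convergence of a net depends only on a tail, $x_\alpha\mc x$.

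The main obstacle is thus conceptual rather than computational: recognizing that the increasing structure of the net lets me pick the test vector $u$ so as to neutralize the unbounded truncation (in (a)--(b) by making $\abs{x_\alpha-x}\wedge u$ constant, in (c) by making it vacuous on a tail), which is precisely what converts $um$-convergence back into honest $m$-convergence. Everything else is routine manipulation of positive parts and the monotonicity of the lattice seminorms $m_\lambda$.
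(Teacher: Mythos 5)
Your argument is correct and is essentially the same as the one the paper invokes by reference (the proof of Lemma~1.2 in \cite{KMT}, adapted verbatim to the multi-norm setting): choosing the test vector $(x_\beta-x)^+$, respectively $(x-y)^+$, to make the truncation constant, and $x-x_{\alpha_0}$ to make it vacuous on a tail. All three steps check out, including the use of the separation of $\mathcal{M}$ to conclude $w=0$ from $m_\lambda(w)=0$ for all $\lambda$.
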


\begin{lem}\label{Cauchy but not um-convergent}
Let $(X,\mathcal{M})$ be an MNVL possessing the pre-Lebesgue property. Let $x_n$ be a positive disjoint sequence which is not $m$-null. 
Put $s_n\coloneqq\sum_{k=1}^n x_k$. Then the sequence $s_n$ is $um$-Cauchy, which is not $um$-convergent.
\end{lem}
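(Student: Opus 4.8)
The plan is to establish the two assertions separately, since they rely on different hypotheses: the $um$-Cauchy property comes from the pre-Lebesgue property, while the failure of convergence comes from the non-nullness of $x_n$ together with Lemma \ref{KMT 2 Lemmas}. Throughout I fix $m>n$, so that $|s_m-s_n|=s_m-s_n=\sum_{k=n+1}^m x_k$, and I fix $u\in X_+$ and $\lambda\in\Lambda$.

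For the $um$-Cauchy property the first step is to exploit disjointness to rewrite the truncated increments. Since the $x_k$ are positive and disjoint, so are the $x_k\wedge u$; using that a finite sum of positive disjoint elements equals their supremum together with the distributivity of $\wedge$ over finite suprema, I would compute
\[
(s_m-s_n)\wedge u=\Big(\sum_{k=n+1}^m x_k\Big)\wedge u=\sum_{k=n+1}^m (x_k\wedge u)=s_m\wedge u-s_n\wedge u .
\]
The key observation is then that $t_n\coloneqq s_n\wedge u$ satisfies $0\le t_n\uparrow\le u$, so the pre-Lebesgue property applies and guarantees that $t_n$ is $m$-Cauchy, i.e. $m_\lambda(t_m-t_n)\to 0$. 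By the displayed identity this is precisely $m_\lambda(|s_m-s_n|\wedge u)\to 0$, and since $u\in X_+$ and $\lambda\in\Lambda$ were arbitrary, $s_n$ is $um$-Cauchy.

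For the failure of $um$-convergence I would argue by contradiction. Suppose $s_n\umc s$ for some $s\in X$. Because $(s_n)$ is increasing, Lemma \ref{KMT 2 Lemmas} upgrades this to $s_n\mc s$, whence $(s_n)$ is $m$-Cauchy and in particular $m_\lambda(s_{n+1}-s_n)\to 0$ for every $\lambda\in\Lambda$. But $s_{n+1}-s_n=x_{n+1}$, so this says exactly that $x_n\mc 0$, contradicting the assumption that $x_n$ is not $m$-null. Hence $s_n$ admits no $um$-limit.

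I expect the only delicate point to be the reduction in the first paragraph. The equality $(s_m-s_n)\wedge u=s_m\wedge u-s_n\wedge u$ is where disjointness is genuinely used: for a general increasing sequence one only has $s_m\wedge u-s_n\wedge u\le(s_m-s_n)\wedge u$, which points the wrong way for bounding $(s_m-s_n)\wedge u$ from above. Disjointness turns this inequality into an identity and thereby makes the pre-Lebesgue property transfer directly to the truncated partial sums. Once that identity is in hand the $um$-Cauchy claim is immediate, and the second claim reduces to a one-line telescoping argument via Lemma \ref{KMT 2 Lemmas}.
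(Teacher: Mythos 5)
Your proposal is correct and follows essentially the same route as the paper: the disjointness identity $|s_m-s_n|\wedge u=s_m\wedge u-s_n\wedge u$ combined with the pre-Lebesgue property applied to the increasing order-bounded sequence $s_n\wedge u$ gives the $um$-Cauchy claim, and Lemma \ref{KMT 2 Lemmas} rules out $um$-convergence exactly as in the paper (stated there in contrapositive form rather than by contradiction). No gaps.
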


\begin{proof}
The sequence $s_n$ is monotone increasing and, since $x_n$ is not $m$-null, $s_n$ is not $m$-convergent. 
Hence, by Lemma \ref{KMT 2 Lemmas}, the sequence $s_n$ is not $um$-convergent. To show that $s_n$ is $um$-Cauchy, fix any $\varepsilon > 0$ and take $0\neq w\in X_+$. 
Since $x_n$ is a positive disjoint sequence, we have $s_n\wedge w =\sum_{k=1}^n w \wedge x_k$. 
The sequence $s_n\wedge w$ is increasing and order bounded by $w$, hence it is $m$-Cauchy, by \cite[Thm.3.22]{Alip03}. 
Let $\lambda\in\Lambda$. We  can find $n_{\varepsilon_\lambda}$ such that $m_\lambda(s_m\wedge w -s_n\wedge w)<\varepsilon$ for all $m\geq n\geq n_{\varepsilon_\lambda}$. 
Observe that 
\begin{align*}
		s_m\wedge w - s_n\wedge w &=\sum_{k=1}^{m} w \wedge x_k -\sum_{k=1}^{n} w \wedge x_k\\
		&=\sum_{k=n+1}^{m}  w \wedge x_k =w \wedge  \sum_{k=n+1}^{m} x_k =w\wedge \abs{s_m - s_n}.
\end{align*}
It follows $m_\lambda(\abs{s_m - s_n}\wedge w)<\varepsilon$ for all  $m\geq n\geq n_{\varepsilon_\lambda}$. But $\lambda\in\Lambda$ was chosen arbitrary. Hence $s_n$ is $um$-Cauchy.
\end{proof}

Next theorem generalizes Theorem 6.4 in \cite{KMT}.

\begin{thm}\label{m-bounded um-closed implies X is um-complete}
Let $(X,\mathcal{M})$ be an $m$-complete MNVL with the pre-Lebesgue property. Then $X$ has the Lebesgue and Levi properties iff every $m$-bounded $um$-closed subset of $X$ is $um$-complete.
\end{thm}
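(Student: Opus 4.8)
The forward implication is immediate: if $X$ has the Lebesgue and Levi properties, then Theorem \ref{um-completeness} already shows that every $m$-bounded and $um$-closed subset of $X$ is $um$-complete (the standing hypotheses of $m$-completeness and the pre-Lebesgue property are not even needed here, since by \cite[Thm.6.63]{Alip03} the Lebesgue and Levi properties force $m$-completeness). So the whole content lies in the converse, which I would establish by proving its contrapositive: assuming that $X$ fails to have both the Lebesgue and the Levi properties, I would exhibit a single $m$-bounded, $um$-closed subset of $X$ that is not $um$-complete.

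The engine for producing such a set is the pair of lemmas already at hand. Suppose one can locate a positive disjoint sequence $(x_n)$ in $X$ that is not $m$-null but whose partial sums $s_n:=\sum_{k=1}^{n}x_k$ form an $m$-bounded sequence. Since $X$ has the pre-Lebesgue property, Lemma \ref{Cauchy but not um-convergent} tells us that $(s_n)$ is $um$-Cauchy but fails to be $um$-convergent in $X$. Now set $A:=\overline{\{s_n:n\in\mathbb{N}\}}^{um}$. As $\{s_n\}$ is $m$-bounded, Lemma \ref{um-closure is m-bdd} guarantees that $A$ is $m$-bounded, and $A$ is $um$-closed by construction. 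The sequence $(s_n)$ lies in $A$ and is $um$-Cauchy; were $A$ $um$-complete, $(s_n)$ would $um$-converge to a point of $A\subseteq X$, contradicting that it is not $um$-convergent. Hence $A$ is an $m$-bounded, $um$-closed, non-$um$-complete set, which is the required contradiction.

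Thus everything reduces to the extraction step, which I expect to be the main obstacle: from the failure of the Lebesgue and Levi properties one must manufacture a positive disjoint, non-$m$-null sequence with $m$-bounded partial sums. This is exactly the multi-normed analogue of the classical fact that a Banach lattice that is not a $KB$-space contains a lattice copy of $c_0$. Concretely, I would argue that, in the presence of $m$-completeness, the Lebesgue and Levi properties together are equivalent to the assertion that every positive disjoint sequence with $m$-bounded partial sums is $m$-null. The easy direction is clear: if $0\le x_\alpha\uparrow$ has $m$-bounded partial sums, then Levi supplies a supremum and Lebesgue forces $m$-convergence, whence the increments tend to $0$; the substantive direction is the disjoint-sequence characterization of the combined Lebesgue and Levi properties (cf.\ \cite{Alip03}). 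Here the pre-Lebesgue hypothesis plays a double role: it is precisely what Lemma \ref{Cauchy but not um-convergent} requires, and it forces every order-bounded disjoint sequence to be $m$-null, so that any witnessing sequence arising from the failure of the Levi property is automatically non-order-bounded while keeping $m$-bounded partial sums. Lemma \ref{KMT 2 Lemmas} underlies the translation between ``no supremum'' and ``not $m$-convergent'' for increasing nets, which is what makes the failure of the Levi property usable; the delicate point will be carrying out the disjointification purely sequentially, so that Lemma \ref{Cauchy but not um-convergent} applies verbatim.
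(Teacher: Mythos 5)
Your proposal is correct and follows essentially the same route as the paper: necessity via Theorem \ref{um-completeness}, and sufficiency by producing a positive disjoint non-$m$-null sequence with $m$-bounded partial sums and applying Lemmas \ref{um-closure is m-bdd} and \ref{Cauchy but not um-convergent} to the $um$-closure of its set of partial sums. The only streamlining you miss is that in an $m$-complete MNVL the pre-Lebesgue and Lebesgue properties coincide (\cite[Thm.3.24]{Alip03}), so the Lebesgue property holds automatically and the ``extraction step'' you flag as the main obstacle reduces to the failure of the Levi property alone, for which \cite[Thm.6.63]{Alip03} directly supplies the desired disjoint sequence.
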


\begin{proof}
The necessity follows directly from Theorem \ref{um-completeness}.

For the sufficiency, first notice that, in an $m$-complete MNVL, the pre-Lebesgue and Lebesgue properties coincide \cite[Thm.3.24]{Alip03}. 

If $X$ does not have the Levi property then, by \cite[Thm.6.63]{Alip03}, there is a disjoint sequence $x_n\in X_+$, which is not $m$-null, 
such that its sequence of partial sums $s_n=\sum_{j=1}^n x_j$ is $m$-bounded.
Let $A=\overline{\{s_n:n\in\mathbb{N}\}}^{um}$. By Lemma \ref{um-closure is m-bdd}, we have that $A$ is $m$-bounded. 
By Lemma \ref{Cauchy but not um-convergent}, the sequence $s_n$ is $um$-Cauchy in $X$ and so in $A$, in contrary 
with that the sequence $s_{n+1}-s_n=x_{n+1}$ is not $m$-null. 
\end{proof}  
	
\begin{thm}\label{seq. um-completeness}
Let $(X,\mathcal{M})$ be an $m$-complete metrizable MNVL, and let $A$ be an $m$-bounded sequentially $um$-closed subset of $X$. 
If $X$ has the $\sigma$-Lebesgue and $\sigma$-Levi properties then $A$ is sequentially $um$-complete. Moreover, the converse holds if, in addition, $X$ is Dedekind complete.
\end{thm}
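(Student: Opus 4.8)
The plan is to treat the two implications separately, mirroring the net-based arguments of Theorem \ref{um-completeness} and of the sufficiency part of Theorem \ref{m-bounded um-closed implies X is um-complete}, but replacing nets by sequences and the Lebesgue/Levi properties by their $\sigma$-versions; metrizability of the $m$-topology is exactly what guarantees that sequences suffice. Throughout I fix a countable directed $\mathcal{M}=\{m_k\}_{k\in\mathbb{N}}$, which is legitimate since $(X,\mathcal{M})$ is metrizable.

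For the sufficiency, it is enough to show that every positive, $m$-bounded, $um$-Cauchy sequence has a $um$-limit in $X$: since $||x_n^{\pm}-x_m^{\pm}|\wedge u|\le|x_n-x_m|\wedge u$ and $m_k(x_n^{\pm})\le m_k(x_n)$, the parts $(x_n^{\pm})$ of any $um$-Cauchy sequence $(x_n)$ in $A$ are again positive, $m$-bounded and $um$-Cauchy, so from their limits we get $x_n\umc y$ for some $y\in X$, and sequential $um$-closedness of $A$ then forces $y\in A$. Assume therefore $x_n\ge0$. If $X$ has a weak unit $e$, then $x-x\wedge ne\downarrow0$ for $x\in X_+$ together with the $\sigma$-Lebesgue property and Proposition \ref{um-convergence at m-q.i.p} make $e$ a quasi-interior point. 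As in Case (1) of Theorem \ref{um-completeness}, the inequality $|x_n\wedge ke-x_m\wedge ke|\le|x_n-x_m|\wedge ke$ shows that $(x_n\wedge ke)_n$ is $m$-Cauchy, hence $m$-convergent to some $y_k$ by $m$-completeness; the sequence $(y_k)$ is increasing and $m$-bounded, so $\sigma$-Levi yields $y=\sup_k y_k$ and $\sigma$-Lebesgue yields $y_k\mc y$. The Birkhoff-inequality computation of Theorem \ref{um-completeness}, with iterated limits taken over $m$, then $k$, then $n$, gives $x_n\umc y$.

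The genuinely new difficulty is the case when $X$ has no weak unit, because the net indexed by finite subsets of a maximal orthogonal system need not admit a cofinal sequence, so the \emph{sequential} $\sigma$-Lebesgue and $\sigma$-Levi properties cannot be applied to the net $z_\delta$ used in Case (2) of Theorem \ref{um-completeness}. I would circumvent this by noting that a single sequence only ``sees'' countably much of the system: the series $u_0=\sum_n 2^{-n}x_n$ is absolutely $m$-convergent (because $\sup_n m_k(x_n)<\infty$ for each $k$), hence $m$-convergent to some $u_0\in X_+$ by $m$-completeness and Lemma \ref{comp. iff -abs conv iff conv}, and $x_n\le 2^n u_0$ for every $n$; thus the whole sequence lies in the ideal generated by $u_0$, in whose generated band $u_0$ is a weak unit. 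Passing to a countable orthogonal subsystem adapted to $u_0$ turns the finite-subset net into an honest sequence, after which the weak-unit argument above applies and the limit is transported back to $X$. Checking that this reduction is compatible with $um$-convergence tested against \emph{all} $u\in X_+$, and not merely those in the band, is the main technical point, and is where I expect the bulk of the work to lie.

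For the converse I would argue contrapositively, in parallel with Theorem \ref{m-bounded um-closed implies X is um-complete}, using Dedekind completeness to supply a $\sigma$-analogue of \cite[Thm.6.63]{Alip03}: a failure of the $\sigma$-Levi property produces a positive disjoint sequence $(x_n)$, not $m$-null, with $m$-bounded partial sums $s_n=\sum_{k\le n}x_k$. Setting $A=\overline{\{s_n:n\in\mathbb{N}\}}^{um}$, Lemma \ref{um-closure is m-bdd} makes $A$ $m$-bounded, while Lemma \ref{Cauchy but not um-convergent} (whose pre-Lebesgue hypothesis is available because, under Dedekind $\sigma$-completeness, the $\sigma$-Lebesgue and pre-Lebesgue properties coincide) together with Lemma \ref{KMT 2 Lemmas} show that $(s_n)$ is $um$-Cauchy but not $um$-convergent, contradicting sequential $um$-completeness of $A$. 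The step I expect to be hardest is handling a failure of $\sigma$-Lebesgue directly: since Lemma \ref{Cauchy but not um-convergent} presupposes pre-Lebesgue, a disjoint \emph{order-bounded} sequence coming from a failure of pre-Lebesgue has order-bounded partial sums that are \emph{not} $um$-Cauchy, so a separate construction is required to exhibit an $m$-bounded, sequentially $um$-closed set that fails to be sequentially $um$-complete. Pinning down that construction, and the precise role of Dedekind completeness in it, is the crux of the converse.
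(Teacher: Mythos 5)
Your sufficiency argument is essentially the paper's: the decisive move in both is to form $e=\sum_{n}2^{-n}x_n$, observe that $m$-boundedness of $A$ makes this series absolutely $m$-convergent, invoke Lemma \ref{comp. iff -abs conv iff conv} to get $e\in X_+$ with $x_n\le 2^n e$, and then run Case (1) of Theorem \ref{um-completeness} inside the band $B_e$, where $e$ is a weak unit. Two remarks. First, you treat the ``no weak unit'' situation as a genuinely new difficulty requiring a countable orthogonal subsystem; in fact no orthogonal system is needed at all, since the construction of $e$ already confines the whole sequence to $B_e$ and reduces everything to the weak-unit case. Second, the step you defer as ``the main technical point'' --- transporting $x_n\umc x$ from $B_e$ back to $X$ --- is not where the bulk of the work lies: the $\sigma$-Levi property forces $X$ to be $\sigma$-order complete, hence $B_e$ is a projection band, and $um$-convergence in a projection band implies $um$-convergence in the ambient space by \cite[Thm.3(3)]{DEM2}, a result the paper already uses elsewhere (e.g.\ in Case (2) of Theorem \ref{um-completeness} and in Proposition 9). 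So the forward direction is recoverable, but you should actually close that step rather than flag it.

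The converse is where you have a genuine gap, and you correctly diagnose it yourself. Your contrapositive plan handles a failure of the $\sigma$-Levi property (disjoint, non-$m$-null positive sequence with $m$-bounded partial sums, then Lemmas \ref{um-closure is m-bdd}, \ref{Cauchy but not um-convergent} and \ref{KMT 2 Lemmas}), but Lemma \ref{Cauchy but not um-convergent} presupposes the pre-Lebesgue property, which is exactly what you cannot assume when it is the $\sigma$-Lebesgue property that fails; and, as you note, a disjoint order-bounded sequence witnessing the failure of pre-Lebesgue has partial sums that are \emph{not} $um$-Cauchy, so the same construction produces no contradiction with sequential $um$-completeness. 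You leave this case entirely open, so the converse is not proved. The paper does not resolve it internally either: it simply cites Proposition 8 of \cite{DEM2}, which characterizes sequential $u\tau$-completeness of $m$-bounded closed sets in Dedekind complete spaces and packages precisely the missing implication. If you want a self-contained argument you would need to reprove that proposition; as written, your proposal establishes at most the $\sigma$-Levi half of the converse.
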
	

\begin{proof}
Suppose $\mathcal{M}=\{m_k\}_{k\in\mathbb{N}}$. Let $0\leq x_n$ be a $um$-Cauchy sequence in $A$. Let $e\coloneqq\sum_{n=1}^\infty\frac{x_n}{2^n}$.	For $k\in\mathbb{N}$, 
$$
	\sum_{n=1}^\infty m_k\bigg(\frac{x_n}{2^n}\bigg)=\sum_{n=1}^\infty\frac{1}{2^n} m_k (x_n)\leq c_k\sum_{n=1}^\infty\frac{1}{2^n}<\infty,
$$ 
where $m_k(a)\leq c_k <\infty$ for all $a\in A$. Since $\sum_{n=1}^{\infty}\frac{x_n}{2^n}$ is absolutely $m$-convergent, then, by Lemma \ref{comp. iff -abs conv iff conv}, 
$\sum_{n=1}^\infty\frac{x_n}{2^n}$ is $m$-convergent in $X$. Note that, $x_n\leq 2^ne$, so $x_n\in B_e$ for all $n\in\mathbb{N}$. Since $X$ has the Levi property, 
then $X$ is $\sigma$-order complete (see \cite [Definition 3.16]{Alip03}). Thus $B_e$ is a projection band. Also $e$ is a weak unit in $B_e$. 
Then, by the same argument as in Theorem \ref{um-completeness}, we get that there is $x\in B_e$ such that $x_n\umc x$ in $B_e$ and so $x_n\umc x$ in $X$. 
Since $A$ is sequentially $um$-closed, we get $x\in A$. Thus $A$ is sequentially $um$-complete.
		
The converse follows from Proposition 8 in \cite{DEM2}.
\end{proof}

\section{$um$-Compact sets}
A subset $A$ of an MNVL $(X,\mathcal{M})$ is said to be {\em $($sequentially$)$ $um$-compact} if, it is (sequentially) compact in the $um$-topology. In this section, we characterize $um$-compact subsets of $X$ in terms of the Lebesgue and Levi properties. We begin with the following result which shows that $um$-compactness can be ``localized'' under certain conditions.
	
\begin{thm}\label{Theorem x um Px um} 
Let $(X,\mathcal{M})$ be an MNVL possessing the Lebesgue property. Let $\{e_\gamma\}_{\gamma\in\Gamma}$ be a maximal orthogonal system. 
For each $\gamma\in\Gamma$, let $B_\gamma$ be the band generated by $e_\gamma$, and $P_\gamma$ be the corresponding band projection onto $B_\gamma$. 
Then $x_\alpha\umc 0\text{~ in ~} X$ iff $P_\gamma x_\alpha\umc 0$ in $B_\gamma$ for all $\gamma\in\Gamma$. 
\end{thm}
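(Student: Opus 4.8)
The plan is to prove both implications, with the forward one routine and the converse carrying all the weight.

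For the forward implication, suppose $x_\alpha\umc 0$ in $X$ and fix $\gamma$. Since band projections are lattice homomorphisms satisfying $0\le P_\gamma z\le z$ for $z\ge 0$, for any $v\in (B_\gamma)_+$ I have $P_\gamma v=v$, hence $\abs{P_\gamma x_\alpha}\wedge v=P_\gamma\abs{x_\alpha}\wedge P_\gamma v=P_\gamma(\abs{x_\alpha}\wedge v)\le\abs{x_\alpha}\wedge v$. Monotonicity of each $m_\lambda$ then gives $m_\lambda(\abs{P_\gamma x_\alpha}\wedge v)\le m_\lambda(\abs{x_\alpha}\wedge v)\to 0$, so $P_\gamma x_\alpha\umc 0$ in $B_\gamma$. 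This direction uses neither maximality nor the Lebesgue property.

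For the converse, assume $P_\gamma x_\alpha\umc 0$ in $B_\gamma$ for every $\gamma$; I must show $m_\lambda(\abs{x_\alpha}\wedge u)\to 0$ for each fixed $\lambda\in\Lambda$ and $u\in X_+$. First I would organize the finite truncations of the orthogonal system: let $\Delta$ denote the finite subsets of $\Gamma$ directed by inclusion, and for $\delta=\{\gamma_1,\dots,\gamma_n\}$ put $P_\delta=P_{\gamma_1}+\dots+P_{\gamma_n}$, the band projection onto $B_\delta=B_{\gamma_1}\oplus\dots\oplus B_{\gamma_n}$. The key preliminary claim is that $u-P_\delta u\downarrow 0$ along $\Delta$: the net is decreasing because $P_{\delta'}u-P_\delta u=P_{\delta'\setminus\delta}u\ge 0$ whenever $\delta'\supseteq\delta$, and if $0\le w\le u-P_\gamma u$ for all $\gamma$ then, since $u-P_\gamma u\perp e_\gamma$, one gets $w\wedge e_\gamma=0$ for every $\gamma$, so $w=0$ by maximality of $\{e_\gamma\}$; thus $0$ is the greatest lower bound. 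Now the Lebesgue property applies: $u-P_\delta u\downarrow 0$ forces $m_\lambda(u-P_\delta u)\to 0$, so given $\varepsilon>0$ I fix $\delta_0=\{\gamma_1,\dots,\gamma_n\}$ with $m_\lambda(u-P_{\delta_0}u)<\varepsilon$.

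With $\delta_0$ fixed I would split $\abs{x_\alpha}\wedge u$. From $P_{\delta_0}u\le u$ and the elementary inequality $(P_{\delta_0}u-\abs{x_\alpha})^+\le(u-\abs{x_\alpha})^+$ one obtains $\abs{x_\alpha}\wedge u\le\abs{x_\alpha}\wedge P_{\delta_0}u+(u-P_{\delta_0}u)$, whence $m_\lambda(\abs{x_\alpha}\wedge u)\le m_\lambda(\abs{x_\alpha}\wedge P_{\delta_0}u)+\varepsilon$. For the remaining term I use disjointness: since $\abs{x_\alpha}\wedge P_{\delta_0}u$ lies in the projection band $B_{\delta_0}$, it equals $P_{\delta_0}\abs{x_\alpha}\wedge P_{\delta_0}u$, and because the $B_{\gamma_i}$ are pairwise disjoint this decomposes as $\sum_{i=1}^n\abs{P_{\gamma_i}x_\alpha}\wedge P_{\gamma_i}u$. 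Applying $m_\lambda$ and the triangle inequality, $m_\lambda(\abs{x_\alpha}\wedge P_{\delta_0}u)\le\sum_{i=1}^n m_\lambda(\abs{P_{\gamma_i}x_\alpha}\wedge P_{\gamma_i}u)$. Here is the payoff: this is a \emph{finite} sum, and for each $i$ the hypothesis $P_{\gamma_i}x_\alpha\umc 0$ in $B_{\gamma_i}$, applied to $P_{\gamma_i}u\in(B_{\gamma_i})_+$, gives $m_\lambda(\abs{P_{\gamma_i}x_\alpha}\wedge P_{\gamma_i}u)\to 0$. Thus there is $\alpha_0$ with $m_\lambda(\abs{x_\alpha}\wedge P_{\delta_0}u)<\varepsilon$ for $\alpha\ge\alpha_0$, whence $m_\lambda(\abs{x_\alpha}\wedge u)<2\varepsilon$; letting $\varepsilon\to 0$ finishes the converse.

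I expect the main obstacle to be the passage from infinitely many components to a finite subfamily. Controlling $\abs{x_\alpha}\wedge u$ requires trimming the tail $u-P_{\delta_0}u$ \emph{uniformly in} $\alpha$, and this is precisely where the Lebesgue property is indispensable: without order continuity, $u-P_\delta u\downarrow 0$ would not yield $m$-smallness of the tail, and the finite-sum argument would collapse. The remaining delicate points, namely that $u-P_\delta u\downarrow 0$ rests on maximality of $\{e_\gamma\}$ and that the meet decomposes over the disjoint bands, are more mechanical but should be stated with care.
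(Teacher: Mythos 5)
Your proof is correct and follows essentially the same route as the paper: the forward direction via $\abs{P_\gamma x_\alpha}\wedge v=P_\gamma(\abs{x_\alpha}\wedge v)\le\abs{x_\alpha}\wedge v$, and the converse by splitting $\abs{x_\alpha}\wedge u$ into a finite part (killed by the hypothesis, since it is a finite sum of terms $\abs{P_{\gamma_i}x_\alpha}\wedge P_{\gamma_i}u\mc 0$) and a tail $u-P_\delta u\downarrow 0$ (handled by maximality of $\{e_\gamma\}$ plus the Lebesgue property). The only cosmetic difference is that the paper packages the tail estimate as a decreasing net $y_{(F,n)}=\frac{1}{n}\sum_{\gamma\in F}P_\gamma u+\sum_{\gamma\in\Gamma\setminus F}P_\gamma u$ indexed by $\mathscr{F}(\Gamma)\times\mathbb{N}$, whereas you run a direct $\varepsilon$-argument after fixing one finite set $\delta_0$; the underlying ideas are identical.
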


\begin{proof}
For the forward implication, we assume that $x_\alpha\umc 0$ in $X$. Let $b\in (B_\gamma)_+$. Then 
$$
	\abs{P_\gamma x_\alpha}\wedge b=P_\gamma\abs{x_\alpha}\wedge b\leq\abs{x_\alpha}\wedge b\mc 0,
$$
that implies $P_\gamma x_\alpha\umc 0$ in $B_\gamma$.

For the backward implication, without lost of generality, we may assume that $x_\alpha\geq 0$ for all $\alpha$. Let $u\in X_+$. Our aim is to show that $x_\alpha\wedge u\mc 0$. 
It is known that $x_\alpha\wedge u=\sum_{\gamma\in\Gamma}P_\gamma(x_\alpha\wedge u)$. Let $F$ be a finite subset of $\Gamma$. Then
\begin{equation}\label{um 1}
		x_\alpha\wedge u=\sum_{\gamma\in F}P_\gamma(x_\alpha\wedge u)+\sum_{\gamma\in\Gamma\setminus F}P_\gamma(x_\alpha\wedge u).
\end{equation}
Note 
\begin{equation}\label{um 1`}
		\sum_{\gamma\in F}P_\gamma(x_\alpha\wedge u)=\sum_{\gamma\in F} P_\gamma x_\alpha \wedge P_\gamma u \mc 0.
\end{equation}
We have to control the second term in \eqref{um 1}.
\begin{equation}\label{um 2'}
		\sum_{\gamma\in\Gamma\setminus F}P_\gamma(x_\alpha\wedge u)\leq\frac{1}{n}\sum_{\gamma\in F} P_\gamma u + \sum_{\gamma\in\Gamma\setminus F} P_\gamma u,
\end{equation}
where $n\in\mathbb{N}$.
Let $\mathscr{F}(\Gamma)$ be the collection of all finite subsets of $\Gamma$.
Let $\Delta=\mathscr F(\Gamma)\times\mathbb{N}$. For each $\delta=(F,n)$, put
$$
  y_\delta=\frac{1}{n}\sum_{\gamma\in F} P_\gamma u + \sum_{\gamma\in\Gamma\setminus F} P_\gamma u.
$$
We show that $y_\delta$ is decreasing. Let $\delta_1\leq\delta_2$ then $\delta_1=(F_1,n_1),\delta_2=(F_2,n_2)$. Then $\delta_1\leq\delta_2$ iff $F_1\subseteq F_2$ and $n_1\leq n_2$. 
But $n_1\leq n_2$ iff $\frac{1}{n_1}\geq\frac{1}{n_2}$. So,
\begin{equation}\label{um 3}
		\frac{1}{n_1}\sum_{\gamma\in F_1} P_\gamma u\geq\frac{1}{n_2}\sum_{\gamma\in F_1} P_\gamma u.
\end{equation} 
Note also
\begin{equation}\label{um 5}
		\frac{1}{n_2}\sum_{\gamma\in F_2} P_\gamma u=\frac{1}{n_2}\sum_{\gamma\in F_1} P_\gamma u+\frac{1}{n_2}\sum_{\gamma\in F_2\setminus F_1} P_\gamma u.
\end{equation}
Since $F_1\subseteq F_2$, then $\Gamma\setminus F_1\supseteq\Gamma\setminus F_2$ and hence,
		$\sum_{\gamma\in\Gamma\setminus F_1} P_\gamma u\geq\sum_{\gamma\in\Gamma\setminus F_2} P_\gamma u$.
Note, that
\begin{equation}\label{um 7}
		\sum_{\gamma\in\Gamma\setminus F_1} P_\gamma u=\sum_{\gamma\in F_2\setminus F_1} P_\gamma u+\sum_{\gamma\in\Gamma\setminus F_2} P_\gamma u.
\end{equation}
Now,
\begin{equation}\label{um 8}
		\sum_{\gamma\in F_2\setminus F_1} P_\gamma u\geq\frac{1}{n_2}\sum_{\gamma\in F_2\setminus F_1} P_\gamma u.
\end{equation}
Combining \eqref{um 7} and \eqref{um 8}, we get 
\begin{equation}\label{um 9}
		\sum_{\gamma\in\Gamma\setminus F_1} P_\gamma u\geq\sum_{\gamma\in\Gamma\setminus F_2} P_\gamma u+\frac{1}{n_2}\sum_{\gamma\in F_2\setminus F_1} P_\gamma u.
\end{equation} 
Adding \eqref{um 3} and \eqref{um 9}, we get
\begin{equation*}
		\frac{1}{n_1}\sum_{\gamma\in F_1} P_\gamma u+\sum_{\gamma\in\Gamma\setminus F_1} P_\gamma u\geq\frac{1}{n_2}\sum_{\gamma\in F_1} 
   	P_\gamma u+\frac{1}{n_2}\sum_{\gamma\in F_2\setminus F_1} P_\gamma u+\sum_{\gamma\in\Gamma\setminus F_2} P_\gamma u.
\end{equation*}
It follows from \eqref{um 5}, that
\begin{equation*}
		\frac{1}{n_1}\sum_{\gamma\in F_1} P_\gamma u +\sum_{\gamma\in\Gamma\setminus F_1} P_\gamma u\geq\frac{1}{n_2}\sum_{\gamma\in F_2} P_\gamma u+\sum_{\gamma\in\Gamma\setminus F_2} P_\gamma u,
\end{equation*}
that is $y_{\delta_1}\geq y_{\delta_2}$. Next, we show $y_\delta\downarrow 0$. Assume $0\leq x\leq y_\delta$ for all $\delta\in\Delta$. Let $\gamma_0\in\Gamma$ be arbitrary and fix it. 
Let 
$$
  F=\{\gamma_0\}, \ n\in\mathbb{N}, \ 0\leq x\leq\frac{1}{n}P_{\gamma_0}u+\sum_{\gamma\in\Gamma\setminus\{\gamma_0\}}P_\gamma u.
$$ 
We apply $P_{\gamma_0}$ for the expression above, 
so $0\leq P_{\gamma_0}x\leq\frac{1}{n}P_{\gamma_0} u$ for all $n\in\mathbb{N}$, and so $P_{\gamma_0}x=0$. Since $\gamma_0\in\Gamma$ was chosen arbitrary, we get $P_{\gamma_0}x=0$ for all $\gamma\in\Gamma$. 
Hence, $x=0$ and so $y_\delta\downarrow 0$. Since $(X,\mathcal{M})$ has the Lebesgue property, we get $y_\delta\mc 0$. Therefore, by \eqref{um 2'}, 
\begin{equation}\label{um9}
		\sum_{\gamma\in\Gamma\setminus F} P_\gamma (x_\alpha\wedge u)\leq y_\delta\mc 0.
\end{equation}
Hence \eqref{um 1}, \eqref{um 1`}, and \eqref{um9} imply $x_\alpha\wedge u\mc 0$.
\end{proof}
	
The following result and its proof are similar to Theorem 7.1 in \cite{KMT}. Therefore we omit its proof.
	
\begin{thm}\label{Tychonoff-Like theorem}
Let $(X,\mathcal{M})$ be an MNVL possessing the Lebesgue and Levi properties. Let $\{e_\gamma\}_{\gamma\in\Gamma}$ be a maximal orthogonal system. Let $A$ be a $um$-closed $m$-bounded subset of $X$. 
Then $A$ is $um$-compact iff $P_\gamma(A)$ is $um$-compact in $B_\gamma$ for each $\gamma\in\Gamma$, where $B_\gamma$ is the band generated by $e_\gamma$ and $P_\gamma$ is 
the band projection corresponding to $B_\gamma$.
\end{thm}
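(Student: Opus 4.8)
The plan is to handle the two implications separately, deriving both from the localization result Theorem~\ref{Theorem x um Px um}, which identifies the $um$-null nets of $X$ with the nets whose projections $P_\gamma x_\alpha$ are $um$-null in every $B_\gamma$. The forward implication is soft and uses only continuity of the band projections, while the converse is the substantial half and requires a Tychonoff-type extraction followed by an order-theoretic gluing.

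For the forward implication I would first observe that each band projection $P_\gamma$ is $um$-continuous as a map $(X,um)\to(B_\gamma,um)$. Indeed, if $x_\alpha\umc x$ in $X$ then $x_\alpha-x\umc 0$, and the forward direction of Theorem~\ref{Theorem x um Px um} (available via $\abs{P_\gamma(x_\alpha-x)}\wedge b=P_\gamma\abs{x_\alpha-x}\wedge b\le\abs{x_\alpha-x}\wedge b\mc 0$) gives $P_\gamma(x_\alpha-x)\umc 0$, i.e.\ $P_\gamma x_\alpha\umc P_\gamma x$. Since continuity between topological spaces is detected by nets, $P_\gamma$ is continuous, so $P_\gamma(A)$ is the continuous image of the $um$-compact set $A$ and is therefore $um$-compact in $B_\gamma$. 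This direction uses neither the Levi property nor the $m$-boundedness of $A$.

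For the converse I would argue via nets. Since each $m_\lambda$ is a lattice seminorm and $\abs{P_\gamma x}=P_\gamma\abs{x}\le\abs{x}$, every $P_\gamma(A)$ inherits the $m$-bound $M_\lambda:=\sup_{a\in A}m_\lambda(a)<\infty$ of $A$. Given a net $(x_\alpha)$ in $A$, I map it into $\prod_{\gamma\in\Gamma}P_\gamma(A)$ by $x\mapsto(P_\gamma x)_\gamma$; as each factor is $um$-compact, the product is compact, so after passing to a subnet I may assume $P_\gamma x_\alpha\umc z_\gamma$ in $B_\gamma$ for all $\gamma$ simultaneously, with $z_\gamma\in P_\gamma(A)$ (a $um$-closed set, being compact and Hausdorff). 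The $z_\gamma$ then lie in pairwise disjoint bands. To assemble them I fix a finite $F\subseteq\Gamma$, set $P_F:=\sum_{\gamma\in F}P_\gamma$, and note $\abs{P_F x_\alpha}=\sum_{\gamma\in F}\abs{P_\gamma x_\alpha}\umc\sum_{\gamma\in F}\abs{z_\gamma}$; since $m_\lambda(\abs{P_F x_\alpha})\le m_\lambda(x_\alpha)\le M_\lambda$, Lemma~\ref{um-closure is m-bdd} yields $m_\lambda\big(\sum_{\gamma\in F}\abs{z_\gamma}\big)\le M_\lambda$ uniformly in $F$. Hence the increasing nets $\big(\sum_{\gamma\in F}z_\gamma^{+}\big)_F$ and $\big(\sum_{\gamma\in F}z_\gamma^{-}\big)_F$ are $m$-bounded, and the Levi property produces $z^{\pm}:=\sup_F\sum_{\gamma\in F}z_\gamma^{\pm}\in X$. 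Setting $z:=z^{+}-z^{-}$ and using order continuity of the band projections gives $P_\gamma z=z_\gamma$ for every $\gamma$.

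It then remains to show $x_\alpha\umc z$: for each $\gamma$ we have $P_\gamma(x_\alpha-z)=P_\gamma x_\alpha-z_\gamma\umc 0$, so the backward direction of Theorem~\ref{Theorem x um Px um} gives $x_\alpha-z\umc 0$, and $z\in A$ because $A$ is $um$-closed. Thus every net in $A$ has a $um$-convergent subnet with limit in $A$, so $A$ is $um$-compact. I expect the gluing step to be the main obstacle: extracting a single coherent limit $z\in X$ from coordinatewise limits $z_\gamma$ spread across infinitely many disjoint bands. This is precisely where the $m$-boundedness of $A$ (to bound the partial sums of $\abs{z_\gamma}$ uniformly) and the Levi property (to realize the resulting order-bounded increasing nets as genuine suprema in $X$) become indispensable; the Lebesgue property enters only indirectly, through its role in Theorem~\ref{Theorem x um Px um}.
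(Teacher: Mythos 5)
Your proof is correct and follows essentially the route the paper intends: the paper omits the proof, deferring to Theorem 7.1 of \cite{KMT}, whose argument is exactly your combination of $um$-continuity of $P_\gamma$ for the forward direction and, for the converse, a Tychonoff extraction in $\prod_\gamma P_\gamma(A)$ followed by gluing the coordinatewise limits via $m$-boundedness and the Levi property, with Theorem~\ref{Theorem x um Px um} transferring the convergence back to $X$. Your identification of the gluing step as the place where $m$-boundedness and Levi are indispensable, and of Lebesgue as entering only through the localization theorem, matches the intended proof.
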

	
\begin{thm}
Let $(X,\mathcal{M})$ be an $MNVL$. The following are equivalent$:$
\begin{enumerate}
		\item\label{umcompactA} Any $m$-bounded and $um$-closed subset $A$ of $X$ is $um$-compact.
		\item\label{umcompactAL} $X$ is an atomic vector lattice and $(X,\mathcal{M})$ has the Lebesgue and Levi properties.
\end{enumerate}
\end{thm}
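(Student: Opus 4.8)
The plan is to prove the two implications separately, treating $(2)\Rightarrow(1)$ as the routine half and concentrating the effort on the necessity $(1)\Rightarrow(2)$, whose last step (atomicity) is the real obstacle.

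For $(2)\Rightarrow(1)$, assume $X$ is atomic with the Lebesgue and Levi properties and fix a maximal orthogonal system $\{e_\gamma\}_{\gamma\in\Gamma}$ consisting of atoms, so each band $B_\gamma$ generated by $e_\gamma$ is one–dimensional and the $um$-topology on $B_\gamma$ is just the Euclidean topology of $\mathbb{R}e_\gamma$. Given an $m$-bounded, $um$-closed set $A$, Theorem \ref{um-completeness} already gives that $A$ is $um$-complete, so it suffices to show that every net in $A$ has a $um$-Cauchy subnet. Let $x_\alpha$ be a net in $A$ and write $P_\gamma x_\alpha=t^\gamma_\alpha e_\gamma$; the $m$-boundedness of $A$ forces each scalar net $(t^\gamma_\alpha)_\alpha$ to lie in a compact interval $[-c_\gamma,c_\gamma]$, so by Tychonoff's theorem there is a subnet $x_\beta$ with $t^\gamma_\beta\to t^\gamma$ for every $\gamma$. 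Applying Theorem \ref{Theorem x um Px um} to the doubly–indexed net $x_\beta-x_{\beta'}$, whose $\gamma$-projection $(t^\gamma_\beta-t^\gamma_{\beta'})e_\gamma$ tends to $0$ in each $B_\gamma$, shows that $x_\beta$ is $um$-Cauchy; by completeness it $um$-converges to a limit, which lies in $A$ since $A$ is $um$-closed. Hence $A$ is $um$-compact.

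For $(1)\Rightarrow(2)$ I would first extract the Lebesgue and Levi properties, using that a $um$-compact set is $um$-complete and that every net in it has a $um$-convergent subnet. Suppose $x_\alpha\downarrow 0$. The set $\overline{\{x_\alpha\}}^{um}$ is $m$-bounded by Lemma \ref{um-closure is m-bdd} and $um$-closed, hence $um$-compact, so some subnet $um$-converges to $y$; since $X_+$ is $um$-closed one has $y\ge 0$, while $y\le x_\alpha$ for every $\alpha$ forces $y\le\inf_\alpha x_\alpha=0$, so $y=0$. Monotonicity together with solidity of the $um$-neighbourhoods then upgrades this to $x_\alpha\umc 0$ for the whole net, because once $x_{\alpha_0}$ lies in a solid neighbourhood $V$ so does every $x_\alpha$ with $\alpha\ge\alpha_0$. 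Now $x_{\alpha_0}-x_\alpha\uparrow x_{\alpha_0}$ is increasing and $um$-convergent, so Lemma \ref{KMT 2 Lemmas} yields $x_\alpha\mc 0$: this is the Lebesgue property. For Levi, let $0\le x_\alpha\uparrow$ be $m$-bounded; again $\overline{\{x_\alpha\}}^{um}$ is $um$-compact and a subnet $x_{\alpha_\beta}\umc y$. Closedness of $X_+$ makes $y$ an upper bound of $\{x_\alpha\}$, while $um$-continuity of $w\mapsto(w-z)^+$ shows $(y-z)^+=0$, i.e.\ $y\le z$, for every upper bound $z$; thus $y=\sup_\alpha x_\alpha$ exists and $X$ has the Levi property (and is therefore $m$-complete by \cite[Thm.6.63]{Alip03}).

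It remains to show that $X$ is atomic, which I expect to be the hard part. The idea is that every order interval $[0,u]=X_+\cap(u-X_+)$ is $m$-bounded and $um$-closed, hence $um$-compact by hypothesis, and this compactness must fail on any atomless part. Assuming $X$ is not atomic, I would pick $0<u$ whose principal band is atomless, fix $\lambda_0$ with $m_{\lambda_0}(u)>0$, and build from repeated splitting of components of $u$ a Bernoulli/Rademacher sequence of components $g_n\le u$ that are mutually independent, in the sense that the symmetric–difference component $\abs{g_n-g_m}$ always carries a fixed fraction of the mass, so that $m_{\lambda_0}(\abs{g_n-g_m}\wedge u)=m_{\lambda_0}(\abs{g_n-g_m})\ge\delta>0$ for all $n\neq m$. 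Then $(g_n)\subseteq[0,u]$ can have no $um$-Cauchy subnet, since along any subnet one may select two indices with distinct originals, keeping the displayed quantity $\ge\delta$; as a $um$-compact set admits a convergent, hence Cauchy, subnet of every net, $[0,u]$ fails to be $um$-compact, contradicting $(1)$. The delicate point is carrying out the independent construction while keeping the single seminorm $m_{\lambda_0}$ bounded below uniformly, which is exactly where atomlessness is needed; this can be secured either by a Lyapunov/Sierpi\'nski convexity argument applied to the non-atomic behaviour of $m_{\lambda_0}$, or by representing the atomless, order–continuous band with weak unit as an ideal of some $L_1(\mu)$ and using indicators of independent sets of equal measure, for which $um$-convergence reduces to convergence in measure.
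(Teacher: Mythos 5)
Your direction $(2)\Rightarrow(1)$ is sound and is essentially the paper's argument with Theorem \ref{Tychonoff-Like theorem} unpacked by hand: project onto the one-dimensional atomic bands, use Heine--Borel/Tychonoff on the coordinates, and recombine via Theorem \ref{Theorem x um Px um} and the $um$-completeness from Theorem \ref{um-completeness}. Your derivations of the Lebesgue and Levi properties in $(1)\Rightarrow(2)$ are also correct (the paper obtains Levi the same way, via Lemma \ref{KMT 2 Lemmas} applied to a $um$-convergent subnet of an increasing $m$-bounded net).

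The genuine gap is the atomicity step, precisely the part you flag as ``delicate.'' Your plan requires producing, in an atomless band, a sequence of components $g_n$ of $u$ with $m_{\lambda_0}(\abs{g_n-g_m})\ge\delta>0$ for all $n\neq m$, and none of the tools you invoke is available as stated: $m_{\lambda_0}$ restricted to components is a monotone subadditive functional, not a (vector) measure, so Lyapunov/Sierpi\'nski convexity does not apply; the representation of an order-continuous band with weak unit as an ideal of $L_1(\mu)$ is a theorem about Banach lattices, whereas here you only have one seminorm of a multi-norm (you would first have to pass to the quotient by the null ideal of $m_{\lambda_0}$, complete, and check that atomlessness and components survive); and even the basic step of splitting $u$ into two nonzero disjoint components needs the principal projection property --- in $C[0,1]$, which is atomless, the only components of $\one$ are $0$ and $\one$. (Dedekind completeness does in fact hold at this stage, since Lebesgue plus Levi gives $m$-completeness and hence, via the pre-Lebesgue property, Dedekind completeness, but you would need to say so and use it.) The paper bypasses all of this with one observation you missed: for $x,y\in[0,w]$ one has $\abs{x-y}\wedge w=\abs{x-y}$, so on an order interval the $um$- and $m$-topologies coincide; hence hypothesis $(1)$ makes every order interval $m$-compact, and \cite[Cor.6.57]{Alip03} then yields atomicity \emph{and} the Lebesgue property in one stroke. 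I recommend replacing your atomless construction by this reduction; as written, that step is not a proof.
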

	
\begin{proof}
\eqref{umcompactA} $\Rightarrow$ \eqref{umcompactAL}. Let $[a,b]$ be an order interval in $X$. For $x\in [a,b]$, we have $a\leq x\leq b$ and so $0\leq x-a\leq b-a$. 
Consider the order interval $[0,b-a]\subseteq X_+$. Clearly, $[0,b-a]$ is $m$-bounded and $um$-closed in $X$. By \eqref{umcompactA}, the order interval $[0,b-a]$ is $um$-compact. 
Let $x_\alpha$ be a net in $[0,b-a]$. Then there is a subset $x_{\alpha_\beta}$ such that $x_{\alpha_\beta}\umc x$ in $[0,b-a]$. That is $\abs{x_{\alpha_\beta}-x}\wedge u\mc 0$ for all $u\in [0,b-a]$. 
Hence, $\abs{x_{\alpha_\beta}-x}=\abs{x_{\alpha_\beta}-x}\wedge(b-a)\mc 0$. So, $x_{\alpha_\beta}\mc x$ in $[0,b-a]$. 
Thus, $[0,b-a]$ is $m$-compact. Consider the following shift operator $T_a:X\rightarrow X$ given by $T_a(x)\coloneqq x+a$. 
Clearly, $T_a$ is continuous, and so $T_a([0,b-a])=[a,b]$ is $m$-compact.\\
Since any order interval in $X$ is $m$-compact, then it follows from \cite[Cor.6.57]{Alip03} that $X$ is atomic and has the Lebesgue property. 
It remains to show that $X$ has the Levi property. 
Suppose $0\leq x_\alpha\uparrow$ and is $m$-bounded. Let $A=\overline{\{x_\alpha\}}^{um}$. Then $A$ is $um$-closed and, by Lemma \ref{um-closure is m-bdd}, $A$ is an $m$-bounded subset of $X$. 
Thus, $A$ is $um$-compact and so, there are a subnet $x_{\alpha_\beta}$ and $x\in A$ such that $x_{\alpha_\beta}\umc x$. Hence, by Lemma \ref{KMT 2 Lemmas}, $x_{\alpha_\beta}\uparrow x$, and so $x_\alpha\uparrow x$. 
Hence, $X$ has the Levi property.
		
\eqref{umcompactAL} $\Rightarrow$\eqref{umcompactA}. Let $A$ be an $m$-bounded and $um$-closed subset of $X$. We show that $A$ is $um$-compact.	Since $X$ is atomic, there is a maximal orthogonal system 
$\{e_\gamma\}_{\gamma\in\Gamma}$ of atoms. For each $\gamma\in\Gamma$, let $P_\gamma$ be the band projection corresponding to $e_\gamma$. Clearly, $P_\gamma(A)$ is $m$-bounded. 
Now, by the same argument as in the proof of Theorem 7.1 in \cite{KMT}, we get that $P_\gamma(A)$ is $um$-closed in $\prod_{\gamma\in\Gamma}B_\gamma$, and so it is $um$-closed in $B_\gamma$. 
But $um$-closedness implies $m$-closedness. So $P_\gamma(A)$ is $m$-bounded and $m$-closed in $B_\gamma$ for all $\gamma\in\Gamma$. Since each $e_\gamma$ is an atom in $X$, 
then $B_\gamma=\Span\{e_\gamma\}$ is a one-dimensional subspace. It follows from the Heine-Borel theorem that $P_\gamma(A)$ is $m$-compact in $B_\gamma$, and so it is $um$-compact in $B_\gamma$ for all $\gamma\in\Gamma$. 
Therefore, Theorem \ref{Tychonoff-Like theorem} implies that $A$ is $um$-compact in $X$.
\end{proof}
	
\begin{prop}
Let $A$ be a subset of an $m$-complete metrizable MNVL  $(X,\mathcal{M})$.	
\begin{enumerate}
			\item\label{p umcompact} If $X$ has a countable topological orthogonal system, then $A$ is sequentially $um$-compact iff $A$ is $um$-compact.
			\item\label{p mbounded} Suppose that $A$ is $m$-bounded, and $X$ has the Lebesgue property. If $A$ is $um$-compact, then $A$ is sequentially $um$-compact.
\end{enumerate}
\end{prop}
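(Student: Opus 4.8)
The plan is to handle the two parts separately, in each case reducing matters to the classical fact that in a metrizable space compactness and sequential compactness coincide.

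For $(1)$, since $X$ has a countable topological orthogonal system, the implication $(i)\Rightarrow(ii)$ of Proposition \ref{metrizable iff countable t.o.s} shows that the $um$-topology on $X$ is metrizable. Then the subspace $A$, carrying the induced $um$-topology, is itself metrizable, and for metrizable spaces ``compact'' and ``sequentially compact'' are equivalent. Thus $(1)$ follows in a single step once the earlier proposition is invoked.

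For $(2)$ the $um$-topology on all of $X$ need not be metrizable, so the idea is to manufacture from the given sequence a band on which it \emph{is}. Given $(x_n)\subseteq A$, I would first exploit $m$-boundedness: writing $c_k:=\sup_{a\in A}m_k(a)<\infty$, the series $\sum_n\abs{x_n}/2^n$ is absolutely $m$-convergent because $\sum_n m_k(\abs{x_n}/2^n)\le c_k\sum_n 2^{-n}<\infty$, so by $m$-completeness and Lemma \ref{comp. iff -abs conv iff conv} it $m$-converges to some $e\in X_+$, and $\abs{x_n}\le 2^n e$ forces every $x_n$ into the band $B_e=\{e\}^{dd}$ generated by $e$. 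A short disjointness computation shows $B_e$ is $um$-closed: the map $z\mapsto\abs{z}\wedge\abs{w}$ is $um$-continuous (it is nonexpansive, so $\abs{\abs{z_\beta}\wedge\abs{w}-\abs{z}\wedge\abs{w}}\wedge u\le\abs{z_\beta-z}\wedge u$), whence $B_e=\bigcap_{w\perp e}\{z:\abs{z}\wedge\abs{w}=0\}$ is an intersection of $um$-closed sets. Since the $um$-topology is coarser than the $m$-topology, $B_e$ is then $m$-closed, hence an $m$-complete metrizable MNVL in its own right; as $e$ is a weak unit of $B_e$ and $B_e$ inherits the Lebesgue property, Proposition \ref{um-convergence at m-q.i.p} makes $e$ a quasi-interior point of $B_e$, and applying Proposition \ref{metrizable iff countable t.o.s} \emph{inside} $B_e$ shows the $um$-topology of $B_e$ is metrizable.

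Next I would extract a subsequence. Put $K:=\overline{\{x_n:n\in\mathbb{N}\}}^{um}$; since $A$ is $um$-compact (hence $um$-closed) we have $K\subseteq A$, and $K$ is $um$-closed in the $um$-compact set $A$, so $K$ is $um$-compact in $X$; moreover $K\subseteq B_e$ because $B_e$ is $um$-closed. The $um$-topology of $B_e$ is coarser than the topology $X$ induces on $B_e$ (it tests against fewer $u$), so the identity carries $K$ to a compact set in the metrizable $um$-topology of $B_e$; metrizable compactness is sequential, giving a subsequence $x_{n_j}$ that converges to some $x\in K\subseteq A$ in the $um$-topology of $B_e$. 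The main obstacle is precisely this last upgrade, since $x_{n_j}$ converges only in the a priori coarser $um$-topology of $B_e$, whereas sequential $um$-compactness demands convergence in the $um$-topology of $X$. To bridge the gap I would argue by contradiction: if $x_{n_j}\not\umc x$ in $X$, a $um$-open $U\ni x$ is avoided by a further subsequence, which by $um$-compactness of $K$ in $X$ has a subnet $um$-converging in $X$ to some $z\in K$; as $um_X$-convergence implies $um_{B_e}$-convergence and the latter topology is Hausdorff, this subnet must have $um_{B_e}$-limit $x$, forcing $z=x$, so the subnet is eventually in $U$ while all its terms lie outside $U$, a contradiction. Hence $x_{n_j}\umc x$ in $X$ with $x\in A$, proving $A$ sequentially $um$-compact. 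The only delicate points are the $um$-closedness of $B_e$ and this final compactness/Hausdorff upgrade; everything else is a direct appeal to the metrizability criterion of Proposition \ref{metrizable iff countable t.o.s}.
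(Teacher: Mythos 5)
Your proof is correct and follows the paper's route almost exactly: part (1) is the same one-line appeal to Proposition \ref{metrizable iff countable t.o.s}, and in part (2) you build the same element $e=\sum_n \abs{x_n}/2^n$, localize to the band $B_e$, observe that $e$ is a quasi-interior point of $B_e$ via the Lebesgue property and Proposition \ref{um-convergence at m-q.i.p}, and invoke the metrizability criterion inside $B_e$ to extract a subsequence. The single point of divergence is the final upgrade from $um$-convergence in $B_e$ to $um$-convergence in $X$: the paper disposes of this by citing the projection-band result \cite[Thm.3(3)]{DEM2}, whereas you replace that citation with a self-contained compactness argument (the identity from $(K,um_X)$ to $(K,um_{B_e})$ is a continuous bijection from a compact space onto a Hausdorff space, hence the two topologies agree on $K$). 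Your version costs a little more work --- you must check that $B_e$ is $um$-closed so that $K\subseteq B_e$, which you do correctly via the nonexpansiveness of $z\mapsto\abs{z}\wedge\abs{w}$ --- but it buys independence from the external reference and only uses the $um$-compactness hypothesis already in play; the paper's version is shorter and does not need $K$ at all, since the cited theorem upgrades convergence for arbitrary nets in a projection band.
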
	
	
\begin{proof}
\eqref{p umcompact}. It follows immediately from Proposition \ref{metrizable iff countable t.o.s}.\\
\eqref{p mbounded}. Let $x_n$ be a sequence in $A$. Find $e\in X_+$ such that $x_n$ is contained in $B_e$ (e.g., take $e=\sum_{n=1}^\infty\frac{\abs{x_n}}{2^n}$). 
Since $A$ is $um$-compact, then $A\cap B_e$ is $um$-compact in $B_e$.	
Now, since $X$ is $m$-complete and has the Lebesgue property, then $B_e$ is also $m$-complete and has the Lebesgue property. Moreover, $e$ is a quasi-interior point of $B_e$. 
Hence, by Proposition \ref{metrizable iff countable t.o.s}, the $um$-topology on $B_e$ is metrizable, consequently, $A\cap B_e$ is sequentially $um$-compact in $B_e$. 
It follows that there is a subsequence $x_{n_k}$ that $um$-converges in $B_e$ to some $x\in A\cap B_e$. 
Since $B_e$ is a projection band, then \cite[Thm.3(3)]{DEM2} implies $x_{n_k}\umc x$ in $X$. Thus, $A$ is sequentially $um$-compact.
\end{proof}


\begin{thebibliography}{99}
\normalsize	
		\bibitem{Alip03}
		C. D. Aliprantis and O. Burkinshaw, 
		\emph{Locally solid Riesz spaces with applications to economics}, 
		Mathematical Surveys and Monographs, 105, American Mathematical Society, Providence, 2003.
		
		\bibitem{Aliprantis:06}
		C. D. Aliprantis and O. Burkinshaw,,
		\emph{Positive operators}, 2nd edition, Springer–Verlag, Berlin and Heidelberg, 2006.
		
			
		\bibitem{AEEM1}
		A. Ayd\i n, E. Yu. Emelyanov, N. Erkur\c{s}un \"{O}zcan, and M. A. A. Marabeh.
		Unbounded $p$-convergence in Lattice-Normed Vector Lattices,
		\emph{preprint}, arXiv:1609.05301.
		
		\bibitem{AEEM2}
		A. Ayd\i n, E. Yu. Emelyanov, N. Erkur\c{s}un \"{O}zcan, and M. A. A. Marabeh.
		Compact-Like Operators in Lattice-Normed Spaces, 
		\emph{preprint}, arXiv:1701.03073v2.
		
		\bibitem{DEM1}
		Y. Dabboorasad, E. Y. Emelyanov, and M. A. A. Marabeh, 
		Order convergence in infinite-dimensional vector lattices is not topological,
		\emph{preprint}, 
		arXiv:1705.09883v1.
		
		\bibitem{DEM2}
		Y. Dabboorasad, E. Y. Emelyanov, and M. A. A. Marabeh, 
		$u\tau$-convergence in locally solid vector lattices,
		\emph{preprint}, arXiv:1706.02006v3.
		
		\bibitem{DOT}
		Y. Deng, M. O'Brien, and V. G. Troitsky, 
		Unbounded norm convergence in Banach lattices,
		\emph{Positivity}, to appear, DOI:10.1007/s11117-016-0446-9.
		
		\bibitem{EM16}
		E. Yu. Emelyanov and M. A. A. Marabeh,
		Two measure-free versions of the Brezis-Lieb Lemma,
		\emph{Vladikavkaz Math.\ J.}, 18(1), 2016, 21--25.
		
		\bibitem{Gao:14}
		N. Gao, 
		Unbounded order convergence in dual spaces,
		\emph{J.\ Math.\ Anal.\ Appl.}, 419, 347--354, 2014.
		
		\bibitem{GTX}
		N. Gao, V. G. Troitsky, and F. Xanthos,
		Uo-convergence and its applications to Ces\`aro means in Banach lattices,
		\emph{Israel J.\ Math.}, to appear,
		arXiv:1509.07914.
		
		\bibitem{GLX1}
		N. Gao, D. H. Leung, and F. Xanthos,
		Duality for unbounded order convergence and applications,
		\emph{preprint}, arXiv:1705.06143.
		
		\bibitem{GLX2}
		N. Gao, D. H. Leung, and F. Xanthos,
		The dual representation problem of risk measures,
		\emph{preprint}, arXiv:1610.08806.
		
		\bibitem{GaoX:14}
		N. Gao and F. Xanthos, 
		Unbounded order convergence and application to martingales without probability, 
		\emph{J.\ Math.\ Anal.\ Appl.}, 415, 931--947, 2014.
		
		\bibitem{Jarchow}
		H. Jarchow, \emph{Locally Convex Spaces}, Mathematische Leitfäden, B. G. Teubner, Stuttgart, 1981.
		
		\bibitem{KMT}
		M. Kandi\'c, M. A. A. Marabeh, and V. G.~Troitsky, 
		Unbounded norm topology in Banach lattices, 
		\emph{J.\ Math.\ Anal.\ Appl.}, 451, 259--279, 2017.
	
		\bibitem{KLT}
		M. Kandi\'c,  H. Li, and V. G.~Troitsky, 
		Unbounded norm topology beyond normed lattices, 
		\emph{preprint},  arXiv:1703.10654.
	
		\bibitem{KV}
		M. Kandi\'c and A. ~Vavpeti\'c, 
		Topological aspects of order in $C(X)$, 
		\emph{preprint},  arXiv:1612.05410.
		
		\bibitem{K}
		A. G. Kusraev,
		\emph{Dominated operators},
		Mathematics and its Applications,
		Vol. 519,
		Kluwer Academic Publishers, Dordrecht, 2000.
		
		\bibitem{LC}
		H. Li and Z. Chen,
		\emph{Some loose ends on unbounded order convergence},
		\emph{Positivity}, doi:10.1007/s11117-017-0501-1, 2017.
	
		\bibitem{Sch74}
		H. H. Schaefer,
		\emph{Banach lattices and positive operators},
		Springer-Verlag, Berlin, 1974.
		
		\bibitem{Tay17}
		M. A. Taylor,
		Unbounded topologies and uo-convegence in locally solid vector lattices,
		\emph{preprint}, arXiv:1706.01575.
		 
		\bibitem{Tr04}
		V. G. Troitsky,
		Measures of non-compactness of operators on Banach lattices,
		\emph{Positivity}, 8(2), 165--178, 2004.
		
		\bibitem{Vulikh67}
		B. Z. Vulikh, 
		\emph{Introduction to the theory of partially ordered spaces},
		Wolters-Noordhoff Scientific Publications, Ltd., Groningen, 1967.
		
	
		\bibitem{Wick77}
		A. W. Wickstead,
		Weak and unbounded order convergence in Banach lattices,
		\emph{J.\ Austral.\ Math.\ Soc.\ Ser.\ A},  24(3), 312--319, 1977.

		
		\bibitem{Zab}
		O. Zabeti, 
		Unbounded Absolute Weak Convergence in Banach Lattices,
		\emph{preprint}, arXiv:1608.02151v5.
	
\end{thebibliography}
\end{document}